\newcommand{\dist}{{\operatorname{dist}}}
\newcommand{\Alex}{{\operatorname{Alex}}}
\newcommand{\counte}{theorem}
\newtheorem{defn}[\counte]{\bf Definition}
\newtheorem{prop}[\counte]{\bf Proposition}
\newtheorem{lemma}[\counte]{\bf Lemma}
\newtheorem{coro}[\counte]{\bf Corollary}
\newtheorem{remark}[\counte]{\bf Remark}
\numberwithin{equation}{section}
\renewcommand{\thefootnote}{\fnsymbol{footnote}}
\begin{document}

\renewcommand{\thefootnote}{\arabic{footnote}}

\centerline{\bf\Large Quasi-convex subsets and the farthest direction }
\centerline{\bf\Large in Alexandrov spaces with lower curvature bound \footnote{Supported by NSFC 11971057 and BNSF Z190003. \hfill{$\,$}}}

\vskip5mm

\centerline{Xiaole Su, Hongwei Sun, Yusheng Wang\footnote{The
corresponding author (E-mail: wyusheng@bnu.edu.cn). \hfill{$\,$}}}

\vskip6mm

\noindent{\bf Abstract.} This paper aims to give a further study on quasi-convex subsets
in Alexandrov spaces with lower curvature bound which are introduced in \cite{SSW}.
We first provide new insights on quasi-convex subsets (Theorem A and Corollary C), and then as applications of them we explore more properties of quasi-convex subsets.
\vskip1mm

\noindent{\bf Key words.} Quasi-convex subsets, Alexandrov spaces, extremal subsets, gradient curves.

\vskip1mm

\noindent{\bf  Mathematics Subject Classification (2020)}: 53C20, 51F99.

\vskip6mm

\setcounter{section}{-1}

%%%%%%%%%%%%%%%%%%%%%%%%%%%%%%%%%%%%% section 0 Introduction%%%%%%%%%%%%%%%%%%%%%%%%%%%%%%%%%%%%%%%%%

\section{Introduction}

Finite dimensional Alexandrov spaces with lower curvature bound
can be viewed as a generalization of Riemannian manifolds with lower sectional curvature bound
(\cite{BGP}). Compared to Riemannian manifolds,  Alexandrov spaces might have some singularities,
so that some important subsets appear as some kinds of analogues of totally geodesic submanifolds,
such as convex subsets, extremal subsets and quasi-geodesics ([PP1-2]). Recently,
such a kind of subsets named quasi-convex subsets has been introduced (\cite{SSW}).
They include not only all convex subsets without boundary and extremal subsets but also more other subsets,
such as fixed point sets of isometries on Alexandrov spaces.
Moreover, all shortest pathes in a quasi-convex subset are quasi-geodesics.

To show the definition of quasi-convex subsets, we first make some conventions on notations.

\noindent$\bullet$\ \ $\text{Alex}(k)$: the set of complete and finite dimensional Alexandrov spaces with
curvature $\geqslant k$.

\noindent$\bullet$\ \ $\Bbb S^2_k$: the complete and simply connected space form of dimension 2 and curvature $k$.

\noindent$\bullet$\ \ $|pq|$, $[pq]$: the distance, a minimal geodesic (i.e. shortest path) between $p$ and $q$.

\noindent$\bullet$\ \ $\uparrow_p^q$: the direction from $p$ to $q$ for a given $[pq]$.

\noindent$\bullet$\ \ $\Uparrow_p^q$: the set of all directions from $p$ to $q$.

\noindent For a point $p\in X\in \text{Alex}(k)$, we denote by $\Sigma_pX$ the space of directions of $X$ at $p$ which
belongs to $\text{Alex}(1)$ ([BGP]). In $\Sigma_pX$, $\Uparrow_p^q$ is a closed subset, and
$|\Uparrow_p^q\Uparrow_p^r|$ is the distance between $\Uparrow_p^q$ and $\Uparrow_p^r$. And to $p,q,r\in X$, we associate $\tilde p, \tilde q, \tilde r\in \mathbb S^2_k$
with $|\tilde p\tilde q|=|pq|, |\tilde p\tilde r|=|pr|$ and $|\tilde q\tilde r|=|qr|$ \footnote{If $k>0$ and
$|pq|+|pr|+|qr|=\frac{2\pi}{\sqrt k}$, it is necessary to add a condition that $\tilde p, \tilde q$ and $\tilde r$ lie in a geodesic of length $\frac{\pi}{\sqrt k}$.},
and then we denote by $\tilde\angle_k qpr$ the angle at $\tilde p$ of the triangle
$\triangle \tilde p\tilde q\tilde r$.

\begin{defn}\label{def0.1}{\rm In an $X\in \text{Alex}(k)$, a closed subset $F$ is called to be {\it quasi-convex}
if the following condition is satisfied: if the distance function to $q\not\in F$ restricted to $F$, $\dist_q|_F$, attains a minimum at $p\in F$, then for all $r\in F\setminus\{p\}$
\begin{equation} \label{eqn0.1}
|\Uparrow_p^q\Uparrow_p^r|\leqslant\frac{\pi}{2} \text{\ \ (or equivalently, $\tilde\angle_k qpr\leqslant\frac{\pi}{2}$)}.
\end{equation}
Here, we make a convention that both the empty set and a single point are quasi-convex in $X$.}
\end{defn}

By Toponogov's Theorem\footnote{For the theorem, one can refer to Section 3 in [BGP] (or Theorem 1.1 in \cite{SSW}).}, it is obvious that `$|\Uparrow_p^q\Uparrow_p^r|\leqslant\frac{\pi}{2}$' implies `$\tilde\angle_k qpr\leqslant\frac\pi2$', but not vice versa; however, they are equivalent to each other in the situation of Definition \ref{def0.1} (\cite{SSW}). Moreover,
due to the arbitrariness of $q$, (\ref{eqn0.1}) is in fact equivalent to $|\uparrow_p^q\Uparrow_p^r|\leqslant\frac{\pi}{2}$ for any $[pq]$.

\begin{remark}\label{rem0.2}{\rm In this paper, we say that $F$ is {\it extremal} in $X$ if (\ref{eqn0.1}) holds for all $r\in X\setminus\{p\}$ in Definition \ref{def0.1}. This coincides with the concept `extremal' in \cite{PP1} if $F$ contains at least two points (\cite{SSW}). If $F$ is the empty set or a single point and if $k=1$,
some extra conditions are added in \cite{PP1} for some kind of completeness (cf. \cite{SSW}).}
\end{remark}

\begin{remark}\label{rem0.3}{\rm In \cite{SSW}, locally quasi-convex subsets are also defined. In detail,
a subset $F$ in an $X\in \text{Alex}(k)$ is {\it locally quasi-convex}
if for any $x\in F$ there is a neighborhood $U_x$ of $x$ such that $U_x\cap F$ is closed and if
$\dist_q|_F$ with $q\not\in F$ attains a minimum at $p\in F\cap U_x$, then the corresponding
(\ref{eqn0.1}) holds for all $r\in F\cap U_x\setminus\{p\}$. In a complete Riemannian manifold, a closed and locally quasi-convex subset must be a totally geodesic submanifold.}
\end{remark}

It is obvious that the quasi-convexity (as well as the extremality) of $F$ is determined by the geometry at points realizing minimums of $\dist_q|_F$ with $q\notin F$. A natural question is what is the essential geometry to a general point of $F$.
The first result of this paper gives an answer to it.

\vskip2mm

\noindent {\bf Theorem A.}{\it \label{thmA}
Let $F$ be a closed subset in an $X\in \text{\rm Alex}(k)$. Then $F$ is quasi-convex in $X$ if and only if, for any two distinct points $p, r\in F$ and $\eta\in\Sigma_pX$, there is $\zeta\in\Uparrow_p^r$ and $\xi\in\Sigma_pF$ with $|\eta\xi|\leqslant\frac\pi2$ such that
\begin{equation} \label{eqn0.2}\cos|\eta\zeta|\geqslant\cos|\eta\xi|\cos|\zeta\xi|;
\end{equation}
where if $\Sigma_pF=\emptyset$  (i.e. $p$ is an isolated point of $F$), then (\ref{eqn0.2}) means that $\cos|\eta\zeta|\geqslant 0$. }

\vskip2mm

Note that, in Theorem A, if $\eta$ lies in $\Sigma_pF$, then we can let $\xi=\eta$.
And an alternative formulation of (\ref{eqn0.2}) is $\tilde\angle_1 \eta\xi\zeta\leqslant\frac{\pi}{2}$.

\begin{remark}\label{rem0.4}{\rm In Theorem A, according to the proof of Theorem A,
if $F$ is quasi-convex in $X$, we can in fact select $\xi$
with $|\eta\xi|\leqslant\frac\pi2$ to satisfy a bit stronger version of (\ref{eqn0.2}):
$$\cos|\eta\Uparrow_p^r|\geqslant\cos|\eta\xi|\cos|\Uparrow_p^r\xi|.$$}
\end{remark}

Restricted to `extremal' case, Theorem A can be formulated as follows, which can be seen almost obviously from the proof of Theorem A.

\vskip2mm

\noindent {\bf Corollary B.}{\it \label{croB}
Let $F$ be a closed subset in an $X\in \text{\rm Alex}(k)$. Then
$F$ is extremal in $X$ if and only if, for any $p\in F$ and any $\eta,\zeta\in\Sigma_pX$, there is $\xi\in\Sigma_pF$ with $|\eta\xi|\leqslant\frac\pi2$  such that $\cos|\eta\zeta|\geqslant\cos|\eta\xi|\cos|\zeta\xi|$ (which means that $\cos|\eta\zeta|\geqslant 0$ if $\Sigma_pF=\emptyset$).}

\vskip2mm

From Theorem A, we can derive another equivalent condition of quasi-convexity, which will be
very helpful to see some nice properties of quasi-convex subsets.

\vskip2mm

\noindent {\bf Corollary C.}{\it \label{corC}
Let $F$ be a closed subset in an $X\in \text{\rm Alex}(k)$. Then $F$ is quasi-convex in $X$ if and only if, for any two distinct points $p, r\in F$, if there is $\eta\in\Sigma_pX$ satisfying $|\eta\Uparrow_p^r|>\frac\pi2$, then the farthest direction to $\Uparrow_p^r$ in $\Sigma_pX$ belongs to $\Sigma_pF$. }

\vskip2mm

Note that the farthest direction $\xi$ to $\Uparrow_p^r$ in $\Sigma_pX$,
i.e. $|\xi\Uparrow_p^r|=\max\{|\nu\Uparrow_p^r| : \nu\in \Sigma_pX\}$, is unique if the maximum is bigger than $\frac\pi2$
(by Toponogov's Theorem).
In Corollary C, if $\Sigma_pF=\emptyset$, then ``if there is $\eta\in\Sigma_pX$ satisfying $|\eta\Uparrow_p^r|>\frac\pi2$, then ...'' means that $|\eta\Uparrow_p^r|\leqslant\frac\pi2$ for all $\eta\in\Sigma_pX$.
In Sections 2 and 4, we will present two equivalent versions of Corollary C (see Propositions \ref{prop2.1} and \ref{prop4.1}).

Similarly, for `extremal' case, Corollary C can be formulated as follows:
{\it In an $X\in \text{\rm Alex}(k)$, a closed subset $F$ is extremal if and only if,
for any $p\in F$ and $\zeta\in\Sigma_pX$, if there is $\eta\in\Sigma_pX$ satisfying $|\eta\zeta|>\frac\pi2$,
then  the farthest direction to $\zeta$ in $\Sigma_pX$ belongs to $\Sigma_pF$.}
The necessity of the extremality of $F$ here can be seen from \cite{PP1},
and the sufficiency has been given in \cite{Pet}.

\begin{remark}\label{rem0.5}{\rm Based on Remark \ref{rem0.3}, one can give the corresponding versions
of Theorem A and Corollary C (and Propositions \ref{prop2.1} and \ref{prop4.1}) for local quasi-convexity. }
\end{remark}

\begin{remark}\label{rem0.6}{\rm
Let $F$ be a quasi-convex subset in $X\in \text{\rm Alex}(k)$. \cite{SSW} has shown two important properties of $F$.
One is that $\Sigma_pF$ is also quasi-convex in $\Sigma_pX$ for any $p\in F$,
and the other is that a shortest path in $F$ is a quasi-geodesic in $X$. They can be proven
just from Definition \ref{def0.1} without involving the new viewpoints in Theorem A and Corollary C.}
\end{remark}

In the rest of the paper, we first give proofs of Theorem A and Corollary C in Sections 1 and 2.
Then, as applications of them, we will show several properties of quasi-convex subsets.
For instance, two points in a quasi-convex subset can be jointed with a curve in the subset if they are close sufficiently
(see Section 3), and the intersection of two quasi-convex subsets is also quasi-convex (see Section 4.2).
Moreover, based on Corollary C, we can illustrate quasi-convex subsets by gradient curves of distance functions (see Section 4.1). This will make it easy to judge some kinds of subsets to be quasi-convex (see Sections 4.2-4), such as the fixed point set of an isometry.

%%%%%%%%%%%%%%%%%%%%%%%%%%%%%%%%%%%%% section 1 Proof of Theorem A%%%%%%%%%%%%%%%%%%%%%%%%%%%%%%%%%%%%%%%%%

\section{Proofs of Theorem A and Corollary B}

The main goal of this section is to give a proof for Theorem A.
In the proof, we will use the following facts in Alexandrov geometry.

\begin{lemma}[\cite{BGP}]\label{lem1.1}
Let $X\in \text{\rm Alex}(k)$ and $p\in X$. Then for any small $\epsilon>0$
there is a neighborhood $U_\epsilon$ of $p$ such that, for any $[pq],[pr], [qr]\subset U_\epsilon$,
$$0\leqslant|\uparrow_p^q\uparrow_p^r|-\tilde\angle_k qpr<\epsilon,\ \
0\leqslant|\uparrow_q^p\uparrow_q^r|-\tilde\angle_k pqr<\epsilon\ \text{ and }\
0\leqslant|\uparrow_r^p\uparrow_r^q|-\tilde\angle_k prq<\epsilon.$$
\end{lemma}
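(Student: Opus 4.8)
The plan is to treat the three displayed pairs of inequalities separately. Since the statement is symmetric in the three vertices $p,q,r$ of the triangle (the ball $U_\epsilon$ is centred at $p$, but $q,r\in U_\epsilon$ and all three sides lie in $U_\epsilon$, so once $U_\epsilon$ is small the three vertices play interchangeable roles), it suffices to establish $0\leqslant|\uparrow_p^q\uparrow_p^r|-\tilde\angle_k qpr<\epsilon$ at the vertex $p$ and then repeat the reasoning verbatim at $q$ and $r$. For the lower bound I would invoke only the definition of the angle: writing $q_{\bar s},r_{\bar t}$ for the points of $[pq],[pr]$ at distances $\bar s,\bar t$ from $p$, one has
$$|\uparrow_p^q\uparrow_p^r|=\lim_{\bar s,\bar t\to 0}\tilde\angle_k q_{\bar s}pr_{\bar t},$$
and by Toponogov's Theorem the comparison angle $\tilde\angle_k q_{\bar s}pr_{\bar t}$ is monotone non-increasing in $\bar s$ and $\bar t$; hence the limit equals the supremum over $\bar s\leqslant|pq|$, $\bar t\leqslant|pr|$, which is at least its value $\tilde\angle_k qpr$ at $\bar s=|pq|$, $\bar t=|pr|$. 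This yields $|\uparrow_p^q\uparrow_p^r|-\tilde\angle_k qpr\geqslant 0$ with no smallness hypothesis.

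The real content is the upper bound, which asserts that the above monotone convergence is uniform over all sufficiently small triangles. First I would dispose of the curvature of the model: as the three side lengths tend to $0$, the $\mathbb S^2_k$–comparison angle and the Euclidean ($k=0$) comparison angle of a triangle with the same side lengths differ by an amount tending to $0$ uniformly (elementary trigonometry in $\mathbb S^2_k$, which is infinitesimally Euclidean). It therefore suffices to bound $|\uparrow_p^q\uparrow_p^r|-\tilde\angle_0 qpr$ uniformly for small triangles. I would do this by contradiction and blow-up: were it to fail, there would be $\epsilon_0>0$ and triangles $\triangle pq_nr_n$ with $q_n,r_n\to p$ yet $|\uparrow_p^{q_n}\uparrow_p^{r_n}|\geqslant\tilde\angle_0 q_npr_n+\epsilon_0$. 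Rescaling the metric by $\lambda_n=1/\diam\{p,q_n,r_n\}\to\infty$ gives spaces of curvature $\geqslant k/\lambda_n^2\to 0$ converging, in the pointed Gromov–Hausdorff sense, to the tangent cone $T_pX=C(\Sigma_pX)\in\text{Alex}(0)$, with $p$ the apex and the rescaled $q_n,r_n$ subconverging to points $q_\infty,r_\infty$. The decisive structural fact is that in a metric cone the distance between two points obeys the cone law of cosines with ``angle'' equal to the $\Sigma_pX$–distance of their radial directions; consequently the Euclidean comparison angle at the apex is exactly the actual angle there, so $\tilde\angle_0 q_npr_n\to |\uparrow_p^{q_\infty}\uparrow_p^{r_\infty}|$.

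The main obstacle is passing the actual angle through this limit. Comparison angles are continuous in the (convergent) side lengths, so the right-hand side is well-behaved; but the actual angle $|\uparrow_p^{q_n}\uparrow_p^{r_n}|$ is a priori only lower semicontinuous under Gromov–Hausdorff convergence, which is the wrong direction for closing the contradiction. I would resolve this by proving that the directions $\uparrow_p^{q_n},\uparrow_p^{r_n}$ themselves converge in $\Sigma_pX$ to the radial directions of $q_\infty,r_\infty$ — that is, that the minimal geodesics converge into the tangent cone — whence $|\uparrow_p^{q_n}\uparrow_p^{r_n}|\to|\uparrow_p^{q_\infty}\uparrow_p^{r_\infty}|$ by continuity of the metric on $\Sigma_pX$. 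Combined with the previous paragraph this violates the $\epsilon_0$–gap. Here compactness of $\Sigma_pX$ (valid since $\dim X<\infty$) is precisely what upgrades the pointwise monotone convergence to a uniform estimate, so that one neighbourhood $U_\epsilon$ works for all triangles; applying the argument symmetrically at $q$ and $r$ then completes the proof.
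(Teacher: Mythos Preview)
The paper does not supply its own proof of this lemma; it is quoted from \cite{BGP} and used as a black box. So there is no ``paper's proof'' to compare against, and the relevant question is whether your argument stands on its own.

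Your treatment of the lower bound and of the angle at the \emph{fixed} vertex $p$ is sound: the monotonicity of comparison angles gives $|\uparrow_p^q\uparrow_p^r|\geqslant\tilde\angle_k qpr$, and the blow-up to $T_pX=C(\Sigma_pX)$ together with convergence of directions in the fixed space $\Sigma_pX$ closes the contradiction for that vertex.

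The gap is the sentence ``applying the argument symmetrically at $q$ and $r$ then completes the proof.'' The roles are not symmetric: $p$ is fixed while $q,r$ range over $U_\epsilon$. Your blow-up argument, repeated with $q$ as centre, produces for each individual $q$ a neighbourhood $V_\epsilon(q)$ in which the angle \emph{at $q$} is controlled; but nothing prevents $V_\epsilon(q)$ from shrinking to zero as $q$ varies in $U_\epsilon$. The compactness of $\Sigma_pX$ that you invoke only yields uniformity over \emph{directions at the single point $p$}, not over \emph{base points near $p$}. Attempting instead a single blow-up at $p$ and tracking the angle at the moving vertex $q_n$ runs into exactly the upper-semicontinuity obstacle you identified: directions at $q_n$ live in the varying spaces $\Sigma_{q_n}X$, and in the limit cone the angle at a non-apex vertex need not equal its comparison angle, so no contradiction results.

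The argument in \cite{BGP} circumvents this by a different mechanism: near $p$ one sets up an $(n,\delta)$-strainer ($n=\dim X$), and the strainer persists throughout a whole neighbourhood. Inside that strained region one proves that at \emph{every} point $x$ complementary angles sum to within $\delta'$ of $\pi$; summing around a small triangle gives that the three vertex angles total at most $\pi+\epsilon$, and since each already dominates its comparison angle and the comparison angles sum to essentially $\pi$, each individual excess is below $\epsilon$. This is what provides the uniformity over the moving vertices that your symmetry claim lacks.
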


\begin{lemma}[\cite{BGP}]\label{lem1.2}
Let $X\in\text{\rm Alex}(k)$, and let $p, r, q_i\in X$ with $q_i\to p$ as $i\to\infty$.
If there is $[pq_i]$ such that $\uparrow_p^{q_i}$ converges to $\eta\ (\in\Sigma_pX)$ as $i\to\infty$,
then
$$\lim_{i\to\infty}\frac{|q_ir|-|pr|}{|pq_i|}=-\cos|\Uparrow_p^{r}\eta|.$$
As a result, $\lim\limits_{i\to\infty}\tilde\angle_krpq_i=|\Uparrow_p^{r}\eta|$.
\end{lemma}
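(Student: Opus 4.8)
The plan is to reduce the two assertions to a single two-sided comparison-angle estimate and then to prove that estimate. Write $\ell_i=|pq_i|\to 0$, set $\beta_i=\tilde\angle_k rpq_i$ and $\theta=|\Uparrow_p^r\eta|$. A Taylor expansion of the law of cosines in $\mathbb S^2_k$, using $|q_ir|\to|pr|$ as $\ell_i\to 0$, yields the elementary identity $\dfrac{|q_ir|-|pr|}{\ell_i}+\cos\beta_i\to 0$; for $k>0$ one uses $|pr|<\frac{\pi}{\sqrt k}$, the borderline case $|pr|=\frac{\pi}{\sqrt k}$ being handled separately. Thus the two displayed assertions are equivalent, and it suffices to prove $\beta_i\to\theta$, i.e. $\limsup_i\beta_i\le\theta$ and $\liminf_i\beta_i\ge\theta$ (working along subsequences, since $\uparrow_p^{q_i}\to\eta$ is given and the value $\theta$ is subsequence-independent).

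For the upper bound I would invoke Toponogov's Theorem in the form \emph{comparison angle $\le$ true angle}. Since $\Uparrow_p^r$ is compact, pick $\zeta\in\Uparrow_p^r$ with $|\zeta\eta|=\theta$. The quantity $\beta_i$ depends only on the three distances $|pr|,\ell_i,|q_ir|$, and for \emph{each} direction $\zeta'\in\Uparrow_p^r$ the hinge spanned by the corresponding $[pr]$ and by $[pq_i]$ gives $\beta_i\le|\zeta'\uparrow_p^{q_i}|$. Taking the infimum over $\zeta'$ yields $\beta_i\le|\Uparrow_p^r\uparrow_p^{q_i}|$. As $\uparrow_p^{q_i}\to\eta$ and $\nu\mapsto|\Uparrow_p^r\nu|$ is $1$-Lipschitz on $\Sigma_pX$, the right-hand side converges to $|\Uparrow_p^r\eta|=\theta$, so $\limsup_i\beta_i\le\theta$.

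The lower bound is the main obstacle, because a lower curvature bound only produces ``fat-triangle'' (upper) estimates on the free side $|q_ir|$, hence only upper bounds on $\beta_i$. My plan is to pass to the degenerating comparison triangle $\triangle\tilde p\tilde q_i\tilde r$, whose side $\tilde p\tilde q_i$ has length $\ell_i\to 0$. First, because $q_i\to p$ the geodesics $[rq_i]$ subconverge to a minimal geodesic $[rp]$, so $\uparrow_r^{q_i}\to\uparrow_r^p$ and therefore $\tilde\angle_k prq_i\le|\uparrow_r^p\uparrow_r^{q_i}|\to 0$. As the comparison triangle collapses onto a segment its area tends to $0$, so the spherical angle-sum relation in $\mathbb S^2_k$ gives $\beta_i+\tilde\angle_k pq_ir\to\pi$. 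Applying Toponogov at the collapsing vertex $q_i$, namely $\tilde\angle_k pq_ir\le|\uparrow_{q_i}^p\uparrow_{q_i}^r|=:\psi_i$, it remains to prove the reverse angle estimate $\limsup_i\psi_i\le\pi-\theta$, which would give $\liminf_i\beta_i\ge\pi-(\pi-\theta)=\theta$ and complete the argument.

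The estimate $\limsup_i\psi_i\le\pi-\theta$ is the genuine first-variation content of the lemma and is where the lower curvature bound enters essentially; note it is symmetric to the original difficulty, with the roles of $p$ and $q_i$ interchanged, which is precisely why no further monotonicity shortcut is available. I expect to obtain it by blowing up the metric at $p$: the rescaled pointed spaces $\bigl(X,\tfrac1{\ell_i}\dist,p\bigr)$ converge in the Gromov--Hausdorff sense to the Euclidean cone over $\Sigma_pX$, under which $q_i$ limits to the unit vector $\eta$, the point $r$ escapes to infinity in the direction $\zeta$, and $\uparrow_{q_i}^p$ limits to the inward radial direction ``$-\eta$''; a direct computation in the cone pins the angle at the unit vector between ``toward the apex'' and ``toward the $\zeta$-ray at infinity'' to $\pi-\theta$. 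Controlling the error between $\psi_i$ and this limiting cone angle via Lemma~\ref{lem1.1}, applied to the small triangles at $q_i$ and $p$, is the step I expect to require the most care; it is exactly the first-variation argument carried out in \cite{BGP}.
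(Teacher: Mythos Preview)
The paper does not prove Lemma~\ref{lem1.2}; it is simply quoted from \cite{BGP} as a known input. So there is no ``paper's own proof'' to compare against---the paper treats this as a black box.

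As for your sketch: the reduction of the two displayed assertions to $\beta_i\to\theta$ is correct, and your proof of the upper bound $\limsup_i\beta_i\le\theta$ via Toponogov's hinge comparison is clean and complete. (The side remark that $\tilde\angle_k prq_i\to 0$ follows directly from the degenerating comparison triangle; the detour through convergence of $\uparrow_r^{q_i}$ is unnecessary and only gives subsequential convergence.)

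The lower bound is where the real content lies, and here your proposal has a genuine gap. Your angle-sum reduction to $\limsup_i\psi_i\le\pi-\theta$ is valid, but---as you yourself note---this is the same difficulty transported to the vertex $q_i$, not a simplification. Your proposed resolution via blow-up to the tangent cone is problematic on two counts. First, under pointed Gromov--Hausdorff convergence of Alexandrov spaces angles are only \emph{lower} semicontinuous in general, which yields $\liminf_i\psi_i\ge(\text{cone angle})$, the wrong direction for your purposes; to get the upper bound you would need the non-collapsing upper-semicontinuity, but $r$ escapes to infinity under rescaling, so the standard statement for triples of points at bounded distance does not apply directly. Second, and more seriously, the identification $T_pX\cong C(\Sigma_pX)$ in \cite{BGP} is established \emph{after} the first-variation formula and uses it, so invoking the cone structure here risks circularity. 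The actual argument in \cite{BGP} for the lower bound proceeds by a direct comparison estimate (essentially using Lemma~\ref{lem1.1} on a small auxiliary triangle with a point $s\in[pr]$ near $p$), not via tangent cones. Since you ultimately defer the hard step back to \cite{BGP}, your write-up amounts to a proof of the easy half plus a citation for the hard half---which is more than the paper does, but still not a self-contained proof.
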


\begin{proof}[Proof of Theorem A]\

We first show the necessity of the quasi-convexity in the theorem.
Note that we can let $\xi=\eta$ if $\eta\in \Sigma_{p} F$,
so we can assume that $\eta\notin \Sigma_{p} F$. Then
there is $q_n\in X\setminus F$	such that $q_n \to p $ and $\uparrow_{p}^{q_n}\to \eta$ as $n\to \infty$.
Let $p_n\in F$ satisfy $|q_np_n|=|q_nF|$. Note that $p_n\to p$ as $n\to \infty$,
and we can complete the proof according to the following two cases.

Case 1: there is at least a subsequence of $\{p_n\}$ which belongs to $F\setminus\{p\}$.
In this case, there must be a subsequence
$\{p_{n_i}\}$ with $p_{n_i}\neq p$ such that $\uparrow_{p}^{p_{n_i}}$ converges to some $\xi\in\Sigma_pF$ as $i\to\infty$.
We claim that $|\eta\xi|\leqslant\frac\pi2$ and
\begin{equation} \label{eqn1.1}\cos|\eta\Uparrow_p^r|\geqslant\cos|\eta\xi|\cos|\Uparrow_p^r\xi|.
\end{equation}
This implies that there is $\zeta\in\Uparrow_p^r$ such that (\ref{eqn0.2}) holds. So, we just need to verify the claim in this case. For simpleness, we still denote by $\{p_n\}$ the subsequence $\{p_{n_i}\}$.
Since $|q_np_n|<|q_np|$ and $q_n, p_n\to p$ as $n\to\infty$,
by Lemma \ref{lem1.1} it is easy to see that $|\eta\xi|\leqslant\frac\pi2$. For (\ref{eqn1.1}),
due to the similarity, we only give a proof for the case where $k=0$. By the quasi-convexity of $F$, we have that
\begin{equation} \label{eqn1.2}
\tilde \angle_k  q_np_nr\leqslant  \frac{\pi}{2}.
\end{equation}
Consequently, by the Law of Cosine, we have that 		
$$|rp_n|^2+|q_np_n|^2\geqslant |q_nr|^2,$$
where
\begin{align*}
	|rp_n|^2 & =|rp|^2+|pp_n|^2-2|rp|\cdot |pp_n|\cdot \cos \tilde \angle_0 rpp_n, \\
	|q_np_n|^2  & =|q_np|^2+|pp_n|^2-2|q_np|\cdot |pp_n|\cdot \cos \tilde \angle_0 q_npp_n,\\
    |q_nr|^2  & =|q_np|^2+|pr|^2-2|q_np|\cdot |pr|\cdot \cos \tilde \angle_0 q_npr.
\end{align*}
It then follows that
\begin{align}
	\cos \tilde\angle_0 rpq_n & \geqslant -\frac{|pp_n|^2}{|pq_n|\cdot |pr|} +\frac{|pp_n|}{|pq_n|}\cos\tilde \angle_0 rpp_n+\frac{|pp_n|}{|pr|}\cos\tilde \angle_0 q_npp_n. \label{eqn1.3}
\end{align}
Since $q_n, p_n\to p$, $\uparrow_{p}^{q_n}\to \eta$ and $\uparrow_{p}^{p_n}\to \xi$ as $n\to\infty$, by Lemma \ref{lem1.2} we have that
\begin{equation} \label{eqn1.4}
\tilde \angle_0 rpq_n\to |\Uparrow_{p}^r\eta |\  \text{ and }\
\tilde \angle_0 rpp_n \to |\Uparrow_{p}^r\xi|\ \text{ as }\ n\to \infty.
\end{equation}	
Moreover, we make a subclaim:
\begin{equation} \label{eqn1.5}
\lim_{n\to\infty}\frac{|pp_n|}{|pq_n|}=\cos|\eta \xi|.
\end{equation}
Note that (\ref{eqn1.3})-(\ref{eqn1.5}) together with $\frac{|pp_n|}{|pr|}\to 0$ as $n\to \infty$ implies (\ref{eqn1.1}).
In order to see the subclaim, by Lemma \ref{lem1.1} it suffices to show that
there is $[p_nq_n]$ and $[p_np]$ such that $|\uparrow_{p_n}^{q_n}\uparrow_{p_n}^p|\to\frac\pi2$ as $n\to\infty$. By the quasi-convexity of $F$, there is $[p_nq_n]$, $[p_np]$ and $[p_np_m]$ with $m\neq n$
such that
$$|\uparrow_{p_n}^{q_n}\uparrow_{p_n}^p|\leqslant\frac\pi2 \text{ and }
|\uparrow_{p_n}^{q_n}\uparrow_{p_n}^{p_{m}}|\leqslant\frac\pi2.$$
On the other hand, since $p_n\to p$ and $\uparrow_{p}^{p_n}\to \xi$ as $n\to\infty$,
it is not hard to see that $|\uparrow_{p_n}^p\uparrow_{p_n}^{p_{m}}|\to \pi$
where $n\gg m$ and $m\to\infty$ (by Toponogov's Theorem), which implies
$$|\uparrow_{p_n}^{q_n}\uparrow_{p_n}^p|+|\uparrow_{p_n}^{q_n}\uparrow_{p_n}^{p_{m}}|\to \pi.$$ It therefore follows that
$|\uparrow_{p_n}^{q_n}\uparrow_{p_n}^p|\to\frac\pi2$ as $n\to\infty$ (i.e. the subclaim is verified, so is the claim).

Case 2: $p_n=p$ for all large $n$. In this case, the quasi-convexity of $F$ guarantees that $|\Uparrow_p^{q_n} \Uparrow_{p}^{r}|\leqslant\frac{\pi}{2}$, and thus $|\eta \Uparrow_{p}^{r}|\leqslant\frac{\pi}{2}$
(note that `$q_n\to p$ and $\uparrow_{p}^{q_n}\to \eta$' implies $\Uparrow_{p}^{q_n}\to \eta$, cf. \cite{BGP}).
I.e., there is $\zeta\in\Uparrow_p^r$  such that $\cos|\eta\zeta|\geqslant 0$, so the proof of the necessity is done if $\Sigma_p F=\emptyset$.
If $\Sigma_p F\neq\emptyset$, we claim that
	\begin{align*}
	& |\uparrow_p^{q_n}\xi|=\frac{\pi}{2} \ \text{ for  any } \ \xi\in \Sigma_p F,
	\end{align*}
which implies $|\eta\xi|=\frac{\pi}{2}$.
In fact, we have that $|\uparrow_p^{q_n}\xi|\geqslant\frac\pi2$ because $|q_np|=|q_nF|$ (by Lemma \ref{lem1.2}),
and meanwhile $|\uparrow_p^{q_n}\xi|\leqslant\frac\pi2$ (by the quasi-convexity of $F$).
Note that the claim right above implies that any $\xi\in \Sigma_p F$ satisfies (\ref{eqn0.2}), and thus the proof of the necessity is done.

\vskip2mm

Next, we will verify the sufficiency of the quasi-convexity in Theorem A. We argue by contradiction.
Suppose that $F$ is not quasi-convex in $X$. Then there exists $q\not\in F$ and $p,r\in F$ such that $|qp|=|qF|$, but for some $[qp]$ we have that
\begin{equation} \label{eqn1.6}
	|\uparrow_{p}^q\Uparrow_{p}^r|>\frac{\pi}{2}.
 \end{equation}
We let $\eta\triangleq \uparrow_{p}^q$. By the assumption, there is $\zeta\in\Uparrow_{p}^r$ and $\xi\in\Sigma_pF$ with $|\eta\xi|\leqslant\frac\pi2$ such that
$$\cos |\eta\zeta| \geqslant \cos|\eta\xi| \cdot \cos|\zeta\xi|.$$
Note that `$|\eta\zeta |>\frac\pi2$' (see (\ref{eqn1.6})) together with `$|\eta\xi|\leqslant\frac\pi2$' implies that $|\eta\xi|$ must be less than $\frac\pi2$. However, since $|qp|=|qF|$, by Lemma \ref{lem1.2} we have that $|\eta\xi|\geqslant\frac\pi2$,
a contradiction.
\end{proof}

By restricting the above proof to `extremal' case, we can easily derive Corollary B.

\begin{proof}[Proof of Corollary B]\

If $\zeta$ is the direction of some $[pr]$ at $p$ (here $r$ is not needed to lie in $F$), one just need to replace all
`quasi-convexity' with `extremality' in the proof of Theorem A.
Then for the case where $\zeta$ cannot be realized by a minimal geodesic, we can draw the conclusion by a limiting argument (note that there is $[pr_i]$ with $r_i\to p$ as $i\to\infty$ such that $\uparrow_p^{r_i}\to\zeta$).
\end{proof}

\begin{remark}\label{rem1.3}{\rm Similar to Corollary B, if $F$ is a nonempty quasi-convex subset in
$X\in \text{\rm Alex}(k)$, and if $\Sigma_pF\neq\emptyset$ with $p\in F$, then for any $\eta\in\Sigma_pX$ and $\zeta\in\Sigma_pF$ there is $\xi\in\Sigma_pF$ with $|\eta\xi|\leqslant\frac\pi2$  such that $\cos|\eta\zeta|\geqslant\cos|\eta\xi|\cos|\zeta\xi|$ (by Theorem A). However, the converse might not be true (as an example,
one can consider a submanifold in a Riemannian manifold which is not totally geodesic). }
\end{remark}

%%%%%%%%%%%%%%%%%%%%%%%%%%%%%%%%%%%%%%%%%%%%%%%%%%%%%%%%%%%%%%%%%%%%%%%%%%%%%%%%%%%%%

\section{Proof of Corollary C}

In this section, we will first prove Corollary C, and then show an alternative version of it.

\begin{proof} [Proof of Corollary C]\

We first verify the necessity of the quasi-convexity in the corollary.
Let $p,r\in F$. Since there is $\eta\in\Sigma_pX$ such that $|\eta\Uparrow_p^r|>\frac\pi2$,
the farthest direction $\xi_0$ to $\Uparrow_p^r$ in $\Sigma_pX$ is unique and $|\xi_0\Uparrow_p^r|>\frac\pi2$.
By Theorem A (and Remark \ref{rem0.4}, see (\ref{eqn1.1})), the quasi-convexity of $F$ implies that there is $\xi\in\Sigma_pF$ with $|\xi_0\xi|\leqslant\frac\pi2$  such that
$$\cos|\xi_0\Uparrow_p^r|\geqslant\cos|\xi_0\xi|\cos|\Uparrow_p^r\xi|.$$
Note that `$|\xi_0\Uparrow_p^r|>\frac\pi2$', `$|\xi_0\Uparrow_p^r|\geqslant|\xi\Uparrow_p^r|$' and `$|\xi_0\xi|\leqslant\frac\pi2$' imply that $\xi_0$ has to be equal to $\xi$, so it follows that $\xi_0\in\Sigma_pF$.

Next, we will verify the sufficiency, and argue by contradiction.
Suppose that $F$ is not quasi-convex in $X$. Then there exists $q\not\in F$ and $p,r\in F$ such that $|qp|=|qF|$, but for some $[qp]$ we have that
$$|\Uparrow_{p}^r\uparrow_{p}^q|>\frac{\pi}{2}.	$$
Hence, by the assumption, the farthest direction $\xi$ to $\Uparrow_p^r$ in $\Sigma_pX$
belongs to $\Sigma_pF$. We claim that $|\uparrow_{p}^q\xi|<\frac\pi2$, which
contradicts `$|qp|=|qF|$' (by Lemma \ref{lem1.2}).
In fact, if $|\uparrow_{p}^q\xi|\geqslant\frac\pi2$, by Toponogov's Theorem it is not hard to see that
$|\Uparrow_p^r\xi'|>|\Uparrow_p^r\xi|$ for $\xi'$ near $\xi$ in any $[\uparrow_{p}^q\xi]$
because $|\Uparrow_{p}^r\uparrow_{p}^q|>\frac{\pi}{2}$ and $|\Uparrow_p^r\xi|\geqslant|\Uparrow_{p}^r\uparrow_{p}^q|>\frac{\pi}{2}$; a contradiction
(because $\xi$ is the farthest direction to $\Uparrow_p^r$).
\end{proof}

We now provide an alternative version of Corollary C, which will be easy to be used
in next section. We formulate it in the following proposition.

\begin{prop}\label{prop2.1}
Let $F$ be a closed subset in an $X\in \text{\rm Alex}(k)$. Then $F$ is quasi-convex in $X$ if and only if, for any two distinct points $p, r\in F$, if $\eta \in \Sigma_p X$ satisfies $-\cos|\Uparrow_{p}^r \eta|>0$, then there exists  $\xi\in \Sigma_{p}F$ such that
\begin{equation} \label{eqn2.1}
-\cos |\Uparrow_{p}^r\xi | \geqslant -\cos|\Uparrow_{p}^r \eta|.
\end{equation}
Moreover, when $F$ is quasi-convex and $-\cos|\Uparrow_{p}^r \eta|>0$, the $\xi$ in (\ref{eqn2.1}) can be chosen to satisfy $\cos |\eta\xi|\geqslant-\cos|\Uparrow_{p}^r \eta|$; as a result, there is  $\xi'\in \Sigma_{p}F$ such that
\begin{equation} \label{eqn2.2}
\cos |\Uparrow_{p}^r\xi' | \geqslant -\cos|\Uparrow_{p}^r \eta|.
\end{equation}
\end{prop}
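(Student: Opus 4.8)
The plan is to deduce Proposition~\ref{prop2.1} directly from Corollary~C (together with Remark~\ref{rem0.4}, i.e.\ the stronger version~(\ref{eqn1.1}) of~(\ref{eqn0.2})), since the two statements are essentially reformulations of the same geometric content with $\max\{|\nu\Uparrow_p^r|:\nu\in\Sigma_pX\}$ replaced by the quantity $-\cos|\Uparrow_p^r\eta|$. First, I would record the elementary observation that for $\nu\in\Sigma_pX$ the condition $-\cos|\Uparrow_p^r\nu|>0$ is the same as $|\Uparrow_p^r\nu|>\frac\pi2$, and that $-\cos|\Uparrow_p^r\nu|$ is a strictly increasing function of $|\Uparrow_p^r\nu|$ on $(\frac\pi2,\pi]$; so maximizing $|\Uparrow_p^r\cdot|$ over $\Sigma_pX$ is the same as maximizing $-\cos|\Uparrow_p^r\cdot|$, and in particular the farthest direction $\xi_0$ to $\Uparrow_p^r$ in $\Sigma_pX$ is exactly where $-\cos|\Uparrow_p^r\cdot|$ attains its supremum, which by Toponogov's Theorem is unique once the value exceeds $0$.

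For the necessity direction: assume $F$ is quasi-convex and $\eta\in\Sigma_pX$ satisfies $-\cos|\Uparrow_p^r\eta|>0$. Then $|\eta\Uparrow_p^r|>\frac\pi2$, so by Corollary~C the farthest direction $\xi_0$ to $\Uparrow_p^r$ in $\Sigma_pX$ lies in $\Sigma_pF$. Taking $\xi=\xi_0$ gives $-\cos|\Uparrow_p^r\xi|\geqslant-\cos|\Uparrow_p^r\eta|$ by maximality, which is~(\ref{eqn2.1}). For the final refinement I would invoke Remark~\ref{rem0.4}: applying the stronger inequality~(\ref{eqn1.1}) with $\eta$ in the role of the test direction, quasi-convexity of $F$ gives $\xi\in\Sigma_pF$ with $|\eta\xi|\leqslant\frac\pi2$ and $\cos|\eta\Uparrow_p^r|\geqslant\cos|\eta\xi|\cos|\Uparrow_p^r\xi|$; since the left side is $<0$ and $\cos|\eta\xi|\geqslant 0$, one forces $\cos|\Uparrow_p^r\xi|<0$, and then rearranging yields $\cos|\eta\xi|\geqslant\dfrac{-\cos|\Uparrow_p^r\eta|}{-\cos|\Uparrow_p^r\xi|}\cdot(-\cos|\Uparrow_p^r\xi|)$—more carefully, $\cos|\eta\xi|\cdot(-\cos|\Uparrow_p^r\xi|)\geqslant -\cos|\eta\Uparrow_p^r|$ and $-\cos|\Uparrow_p^r\xi|\leqslant 1$, so $\cos|\eta\xi|\geqslant-\cos|\Uparrow_p^r\eta|$ as claimed. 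The existence of $\xi'$ with $\cos|\Uparrow_p^r\xi'|\geqslant-\cos|\Uparrow_p^r\eta|$ is then immediate: either $|\Uparrow_p^r\xi|<\frac\pi2$ and we take $\xi'=\xi$, or we simply note that $\Sigma_pF$ is itself quasi-convex in $\Sigma_pX$ (Remark~\ref{rem0.6}) and apply the statement inside $\Sigma_pF$, or—cleanest—take a point $\xi'$ along a minimal geodesic from $\xi$ toward $\Uparrow_p^r$ inside $\Sigma_pF$; I would pick whichever bookkeeping is shortest once the details are in front of me.

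For the sufficiency direction I would argue by contradiction, mirroring the sufficiency proof of Corollary~C: if $F$ is not quasi-convex, there are $q\notin F$, $p,r\in F$ with $|qp|=|qF|$ and some $[qp]$ with $|\Uparrow_p^r\uparrow_p^q|>\frac\pi2$. Set $\eta=\uparrow_p^q$, so $-\cos|\Uparrow_p^r\eta|>0$ and the hypothesis provides $\xi\in\Sigma_pF$ with $-\cos|\Uparrow_p^r\xi|\geqslant-\cos|\Uparrow_p^r\eta|>0$, hence $|\Uparrow_p^r\xi|\geqslant|\Uparrow_p^r\eta|>\frac\pi2$. Now I would repeat the short Toponogov argument from the proof of Corollary~C: if $|\eta\xi|\geqslant\frac\pi2$, then moving slightly along $[\eta\xi]$ away from $\eta$ strictly increases the distance to $\Uparrow_p^r$ past $|\Uparrow_p^r\xi|$, contradicting nothing by itself—so instead the point is that $|qp|=|qF|$ forces $|\uparrow_p^q\,\xi|\geqslant\frac\pi2$ by Lemma~\ref{lem1.2}, and one shows this is incompatible with $|\Uparrow_p^r\xi|\geqslant|\Uparrow_p^r\uparrow_p^q|>\frac\pi2$ exactly as in Corollary~C. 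The main obstacle, as usual in this circle of arguments, is bookkeeping the non-uniqueness of directions ($\Uparrow_p^r$ versus a single $\uparrow_p^r$, and the choice of $[qp]$) and making sure the convexity/monotonicity of $-\cos$ on $[\frac\pi2,\pi]$ is applied only where the relevant distances genuinely exceed $\frac\pi2$; everything else is a direct translation through Corollary~C and Remark~\ref{rem0.4}.
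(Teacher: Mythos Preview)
Your handling of the equivalence with Corollary~C and of the bound $\cos|\eta\xi|\geqslant -\cos|\Uparrow_p^r\eta|$ matches the paper's: the paper also declares the first statement a reformulation of Corollary~C and derives the cosine bound directly from Theorem~A. One wrinkle in your sufficiency sketch: the $\xi\in\Sigma_pF$ produced by the hypothesis is not a priori the \emph{farthest} direction to $\Uparrow_p^r$, so the contradiction ``exactly as in Corollary~C'' does not go through as written---that argument uses farthest-ness of $\xi$ in an essential way. The clean fix is to feed the farthest direction $\xi_0$ itself into the hypothesis in the role of $\eta$: you then obtain $\xi\in\Sigma_pF$ with $|\Uparrow_p^r\xi|\geqslant|\Uparrow_p^r\xi_0|$, whence $\xi=\xi_0\in\Sigma_pF$ by uniqueness, and Corollary~C applies.

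The real gap is the existence of $\xi'$ in~(\ref{eqn2.2}). None of your three options works. Option~(a) is neither guaranteed nor sufficient: even when $|\Uparrow_p^r\xi|<\frac\pi2$ you would still need $|\Uparrow_p^r\xi|\leqslant\pi-|\Uparrow_p^r\eta|$, which does not follow from what you have. Option~(c) assumes a minimal geodesic lying \emph{inside} $\Sigma_pF$, which quasi-convexity does not supply. Option~(b) is too vague to be a proof; applying the already-established clauses to the quasi-convex subset $\Sigma_pF\subset\Sigma_pX$ would output a direction in some $\Sigma_\xi(\Sigma_pF)$, not an element of $\Sigma_pF$. The paper instead runs a concrete limit argument you did not find: since $\xi\in\Sigma_pF$, choose $r_i\in F$ with $r_i\to p$ and $\Uparrow_p^{r_i}\to\xi$; for $\zeta\in\Uparrow_p^r$ one has $|\Uparrow_p^{r_i}\zeta|\to|\xi\zeta|>\frac\pi2$, so the already-proved cosine bound, applied with $(r_i,\zeta)$ in place of $(r,\eta)$, yields $\xi_i\in\Sigma_pF$ with $\cos|\zeta\xi_i|\geqslant-\cos|\Uparrow_p^{r_i}\zeta|$, hence $\cos|\Uparrow_p^r\xi_i|\geqslant-\cos|\Uparrow_p^r\Uparrow_p^{r_i}|$; a subsequential limit $\xi'\in\Sigma_pF$ then satisfies~(\ref{eqn2.2}).
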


\begin{proof} Note that the first statement is an alternative formulation of Corollary C. So, we just need to verify the second one. Assume that $F$ is quasi-convex and $\eta \in \Sigma_p X$ satisfies $-\cos|\Uparrow_{p}^r \eta|>0$.
By Theorem A, there is $\zeta\in\Uparrow_p^r$ and $\xi\in\Sigma_pF$ with $|\eta\xi|\leqslant\frac\pi2$  such that
$$\cos|\eta\zeta|\geqslant\cos|\eta\xi|\cos|\zeta\xi|.$$
This implies that $\cos |\eta\xi|\geqslant -\cos|\Uparrow_{p}^r \eta|$ because $-\cos|\eta\zeta|\geqslant -\cos|\Uparrow_{p}^r \eta|>0$ and $\cos|\eta\xi|\geqslant0$.

We next show that there is  $\xi'\in \Sigma_{p}F$ such that (\ref{eqn2.2}) holds. Note that there
is a sequence of $r_i\in F$ such that $r_i\to p$ and $\Uparrow_p^{r_i}\to\xi$ as $i\to\infty$.
And for any $\zeta\in \Uparrow_p^r$, by replacing $\Uparrow_p^r$ and $\eta$ with $\Uparrow_p^{r_i}$ and $\zeta$ respectively,
we can conclude that there is  $\xi_i\in \Sigma_{p}F$ such that $\cos |\zeta\xi_i|\geqslant -\cos|\Uparrow_p^{r_i}\zeta|$,
which implies $\cos |\Uparrow_p^{r}\xi_i|\geqslant -\cos|\Uparrow_p^{r}\Uparrow_p^{r_i}|$.
Thereby, for the limit $\xi'$ of any converging subsequence of $\{\xi_i\}$ (note that $\Sigma_pF$ is a
closed subset in $\Sigma_pX$), we have that
$\cos |\Uparrow_p^{r}\xi'|\geqslant -\cos|\Uparrow_p^{r}\xi|\geqslant -\cos|\Uparrow_{p}^r \eta|$.
\end{proof}

\begin{remark}\label{rem2.2}{\rm By replacing $\Uparrow_{p}^r$ with any $\zeta\in\Sigma_pX$ in Proposition \ref{prop2.1},
we can get the corresponding version of the proposition for `extremal' case. Here, the existences of $\xi$ and $\xi'$ by the extremality of $F$ can be seen from Proposition 1.6 in \cite{PP1}. }
\end{remark}

%%%%%%%%%%%%%%%%%%%%%%%%%%%%%%%%%%%%%%%%%%%%%%%%%%%%%%%%%%%%%%%%%%%%%%%%%%%%%%%%%%%%%%%%%%%%%
\vskip2mm

In the rest of the paper, we will present some properties of quasi-convex subsets, as applications of
the idea of Theorem A (and its equivalent versions---Corollary C and Proposition 2.1).

\section{Connectedness of quasi-convex subsets}

It is known that the number of extremal subsets in a compact space of $\Alex(k)$ is finite (Proposition 3.6 in \cite{PP1}). Unfortunately, there is no such strong conclusion on quasi-convex subsets. For example, in a standard sphere,  there are infinitely many great circles each of which is quasi-convex. Nevertheless, we have
a weaker conclusion for quasi-convex subsets.

\begin{prop} \label{prop3.1}
In a compact space $X\in \Alex(k)$, any quasi-convex subset has a finite number of connected components.
\end{prop}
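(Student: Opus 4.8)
The strategy is to suppose, for contradiction, that a quasi-convex subset $F\subset X$ has infinitely many connected components. Since $X$ is compact, we may pick one point $p_i$ from each of infinitely many distinct components and, passing to a subsequence, assume $p_i\to p_\infty\in F$ (because $F$ is closed). Different $p_i$ lie in different components of $F$, so no two of them can be joined by a path inside $F$; in particular, eventually they are all distinct from $p_\infty$ and from each other. The idea is then to contradict the ``local connectedness near $p_\infty$'' phenomenon that should follow from quasi-convexity — precisely the kind of statement promised in Section~3 of the introduction, that two sufficiently close points of a quasi-convex subset can be joined by a curve lying in the subset. So the first step is to isolate and prove that local statement (or to invoke it, if it is proven earlier in Section~3 before this proposition), and the proof of the proposition then consists of applying it to the pairs $(p_i,p_\infty)$ for $i$ large: for all large $i$, $p_i$ is joined to $p_\infty$ by a curve in $F$, hence $p_i$ lies in the same component as $p_\infty$, so all but finitely many of the $p_i$ lie in one component, contradicting that they were chosen from pairwise distinct components.

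To make the local connectedness step work, the plan is to use Corollary~C (equivalently Proposition~\ref{prop2.1}) together with the gradient-curve description of quasi-convex subsets advertised for Section~4.1. Concretely: fix a small $\rho>0$ and work with the distance function $f=\dist_{p_\infty}$ (or $f=-\dist_{p_\infty}$ if one wants to flow toward $p_\infty$). For a point $x\in F$ close to $p_\infty$, one wants to run the gradient flow of $-f$ (the flow that decreases distance to $p_\infty$) and show it stays inside $F$. By Corollary~C, at any $x\in F$ with $x\ne p_\infty$, if some direction in $\Sigma_xX$ makes angle $>\frac\pi2$ with $\Uparrow_x^{p_\infty}$, then the farthest such direction lies in $\Sigma_xF$; but the gradient direction of $-f$ at $x$ is exactly (or is controlled by) the direction maximizing $-\cos|\Uparrow_x^{p_\infty}\,\cdot\,|$, i.e.\ the farthest direction to $\Uparrow_x^{p_\infty}$. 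Hence the gradient vector of $-f$ at $x$ is tangent to $F$ in the sense of lying in $\Sigma_xF$, and the $F$-invariance of the gradient flow of $-f$ follows (gradient curves of semiconcave functions exist and are unique in Alexandrov spaces; a closed set whose tangent cone contains the gradient at every point is invariant). Running this flow from $x$ for the time needed to reach $p_\infty$ produces the desired curve in $F$ from $x$ to $p_\infty$, provided $x$ is close enough that $f$ has no other critical points in the relevant sublevel region — this is where compactness and smallness of $\rho$ are used.

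The main obstacle, and the place requiring real care, is the gradient-flow step: one must verify (i) that $-f=-\dist_{p_\infty}$ is sufficiently regular (semiconcave away from $p_\infty$, with a well-defined gradient) so that its gradient curves exist, (ii) that the gradient of $-f$ at each $x\in F\setminus\{p_\infty\}$ genuinely coincides with ``the farthest direction to $\Uparrow_x^{p_\infty}$'' whenever that farthest angle exceeds $\frac\pi2$ — and, when it does not exceed $\frac\pi2$, that the gradient is then $0$ or that $x$ is already a point where $F$ near $x$ is handled trivially, so Corollary~C still gives tangency — and (iii) the invariance principle that a closed set $F$ with $\nabla(-f)(x)\in\Sigma_xF$ (suitably interpreted, e.g.\ $\Uparrow_x\subset$ the tangent cone of $F$) for all $x\in F$ is preserved by the flow. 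Points (i) and (iii) are standard in Perelman--Petrunin gradient-curve theory, so the only genuinely new input is (ii), which is exactly Corollary~C; once that dictionary is set up, the finiteness of components falls out by the compactness/subsequence argument above. If instead Section~3 already establishes the local-joinability lemma independently, then this proposition is a short corollary and the whole proof is just the first paragraph.
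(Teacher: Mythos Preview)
Your high-level outline (accumulate points from distinct components, then invoke a ``two nearby points in $F$ can be joined by a curve in $F$'' lemma) is a perfectly valid route, and indeed the paper states exactly such a joinability result as Proposition~\ref{prop3.2}, from which Proposition~\ref{prop3.1} would follow immediately as you say. However, the paper does \emph{not} argue this way: it gives an independent, shorter proof of Proposition~\ref{prop3.1} that avoids joinability altogether. Using Lemma~\ref{lem3.4} (a uniform quantitative regularity statement from \cite{PP1}) together with Proposition~\ref{prop2.1}, the paper derives Lemma~\ref{lem3.5}: for $p,q\in F$ with $|pq|<\epsilon^2$, at least one of $\min_{\xi\in\Sigma_pF}\dist_q'|_p(\xi)<-\epsilon$ or $\min_{\xi\in\Sigma_qF}\dist_p'|_q(\xi)<-\epsilon$ holds. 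If $F_1,F_2$ are two components with $|F_1F_2|<\epsilon^2$, take $p\in F_1$, $q\in F_2$ realizing the distance; then both minima above are $\geqslant 0$ (first-variation at a distance minimizer), contradicting Lemma~\ref{lem3.5}. Hence all components are pairwise $\epsilon^2$-separated and compactness finishes the proof.

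More importantly, your proposed mechanism for the joinability step contains a genuine gap. You want to run the gradient flow of $-f=-\dist_{p_\infty}$ to carry $p_i$ toward $p_\infty$, asserting that (i) ``$-\dist_{p_\infty}$ is semiconcave away from $p_\infty$'' is standard. It is not: on an Alexandrov space with curvature bounded \emph{below}, $\dist_p$ is semiconcave and $-\dist_p$ is semi\emph{convex}, so the Perel$'$man--Petrunin gradient-curve theory does not apply to $-\dist_p$. Correspondingly, Corollary~C and Proposition~\ref{prop4.1} only give that the gradient curve of $\dist_p$ starting from $r\in F$ (which moves \emph{away} from $p$) stays in $F$; there is no single gradient flow that drags $r$ toward $p$ inside $F$. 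The paper's proof of Proposition~\ref{prop3.2} accordingly does not use one gradient curve: it builds the joining curve by an alternating two-sided zigzag, at each step using Lemma~\ref{lem3.5} to move one endpoint a little closer to the other within $F$, and then passes to a limit. If you want to salvage your approach, you would need either that zigzag argument or some substitute for it, not a flow of $-\dist_{p_\infty}$.
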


In fact, we have the following stronger conclusion than Proposition \ref{prop3.1} (which
corresponds to (2) of Corollary 3.2 in \cite{PP1} for `extremal' case).

\begin{prop} \label{prop3.2}
Let $X$ be a compact space in $\Alex(k)$, and let $F$ be a quasi-convex subset in $X$. Then there is $\epsilon>0$ (depending on $X$) such that, for any two distinct points $p,q\in F$ with $|pq|<\epsilon^2$,  there exists a curve in $F$ jointing $p$ and $q$ with length bounded from above by $\frac{|pq|}{\epsilon}$.
\end{prop}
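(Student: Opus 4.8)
The goal is a local-to-curve statement: if $p,q\in F$ are close enough, then they are joined in $F$ by a rectifiable curve of controlled length. The natural tool is Corollary C (or rather Proposition \ref{prop2.1}, its analytic reformulation), which gives, at every point of $F$ and every direction $\eta\in\Sigma_pX$ ``pointing away'' from $\Uparrow_p^q$ by more than $\frac\pi2$, a direction $\xi\in\Sigma_pF$ with $\cos|\eta\xi|\geqslant-\cos|\Uparrow_p^q\eta|$. Applied to $\eta=\uparrow_p^q$ whenever $|\uparrow_p^q(-\uparrow_p^q)|$-type obstructions appear, this says: near a point of $F$ at which $\dist_q$ is ``not yet decreasing along $F$'', there is a direction in $\Sigma_pF$ along which $\dist_q$ decreases at a definite rate. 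I would make this quantitative: there is $\epsilon>0$, depending only on $X$ (via the local comparison constant of Lemma \ref{lem1.1} on a finite cover of the compact $X$), such that for every $p\in F$ with $0<|pq|<\epsilon^2$ there is $\xi\in\Sigma_pF$ with $-\cos|\Uparrow_p^q\xi|\geqslant\epsilon$, i.e.\ moving from $p$ in a $[p\cdot]$-geodesic inside $X$ with initial direction $\xi$ decreases $|\cdot q|$ at rate at least $\epsilon$. The subtlety is that such a geodesic need not lie in $F$, so one cannot simply integrate; instead one works with short geodesics in $F$ itself and a discrete/continuity argument, or with the gradient curve of $-\dist_q$, which the paper announces (Section 4.1) stays in $F$ when $F$ is quasi-convex.

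**Key steps, in order.** First, fix a finite cover of $X$ by neighborhoods $U_{\epsilon_0}$ as in Lemma \ref{lem1.1}, and choose $\epsilon$ small in terms of the resulting uniform comparison constant, the diameter of $X$, and the lower curvature bound $k$; this is where the dependence ``$\epsilon$ depends on $X$'' enters. Second, prove the \emph{rate lemma}: for $p\in F\setminus\{q\}$ with $|pq|<\epsilon^2$, either $p=q$, or there is $\xi\in\Sigma_pF$ with $-\cos|\Uparrow_p^q\xi|\geqslant\epsilon$. This is exactly Proposition \ref{prop2.1} applied with $r$ replaced by $q$ and $\eta=\uparrow_p^q$: if $-\cos|\Uparrow_p^q\uparrow_p^q|=-1>0$ is vacuous, so one instead applies it with $\eta$ ranging over $\Sigma_pX$; the point is that the \emph{uphill} direction for $\dist_q$ (the direction realizing $\max_\eta -\cos|\Uparrow_p^q\eta|$, which is $\geqslant 1-$small since $q\notin F$ forces some uphill direction, quantified via $|pq|$ small and compactness) is, by Corollary C, itself in $\Sigma_pF$, or close to one, giving $\xi\in\Sigma_pF$ with the claimed rate. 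Third, integrate: from $p_0=p$ run the $(-\dist_q)$-gradient curve; by quasi-convexity it remains in $F$ (forward-reference to Section 4.1), $\dist_q$ decreases, and $|\dot\gamma|\leqslant 1$ while $\frac{d}{dt}\dist_q(\gamma(t))\leqslant -\epsilon$ as long as $\dist_q(\gamma(t))\geqslant\epsilon^2$... which is immediate since it starts below $\epsilon^2$; hence it reaches $q$ in time $\leqslant |pq|/\epsilon$, and arc length $\leqslant$ time $\leqslant |pq|/\epsilon$. Alternatively, to avoid invoking gradient curves one builds the curve by a greedy/limit construction: pick a short $[p p_1]\subset F$-ish step decreasing $\dist_q$ by $\geqslant\epsilon\cdot(\text{step length})$, iterate, and pass to a limit using compactness and the uniform rate to get a rectifiable curve; the length bound $\sum(\text{step lengths})\leqslant |pq|/\epsilon$ comes from telescoping the $\dist_q$-decrease.

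**Main obstacle.** The crux is the rate lemma, and within it the passage from ``there \emph{exists} $\eta\in\Sigma_pX$ with $|\eta\Uparrow_p^q|$ large'' (which Corollary C requires as a hypothesis) to a \emph{uniform} lower bound $-\cos|\Uparrow_p^q\xi|\geqslant\epsilon$ with $\xi\in\Sigma_pF$. One must argue that when $|pq|$ is very small, $\dist_q$ genuinely has an uphill direction at $p$ of definite steepness — this is not automatic (near a point where $F$ ``turns around'' $q$ it could be flat), and this is precisely where one needs $q\notin F$ together with compactness and the Lemma \ref{lem1.1} comparison to convert ``$p$ is not a minimum of $\dist_q|_F$, or $|pq|<|p'q|$ for the nearby minimum $p'$'' into a quantitative gradient bound; the honest route is to let $\gamma$ be a shortest path \emph{in $F$} from $p$ toward the $\dist_q|_F$-minimizer, show the $\dist_q$ decreases along it at rate $\geqslant\epsilon$ via \eqref{eqn0.1} applied at the minimizer and Toponogov, and then the ``curve in $F$'' is literally a concatenation of such shortest paths. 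Handling the case where $\Sigma_pF$ is low-dimensional or $p$ is near the boundary-like locus of $F$, and making $\epsilon$ genuinely uniform over the non-compact-looking family of pairs $(p,q)$, will require the finite-cover reduction at the start and some care, but no new ideas beyond Corollary C and standard comparison.
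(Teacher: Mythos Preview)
There is a genuine gap in your plan, stemming from two related sign/symmetry errors.

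First, the gradient-curve route does not work as stated. Proposition \ref{prop4.1} (and Corollary C) concern the gradient curve of $+\dist_p$ starting from $r\in F$: this curve moves \emph{away} from $p$ (its tangent is the farthest direction to $\Uparrow_\cdot^{\,p}$), and quasi-convexity guarantees only that \emph{this} curve stays in $F$. You want the gradient curve of $-\dist_q$ starting from $p$, i.e.\ a curve moving \emph{toward} $q$; nothing in the paper says such a curve stays in $F$, and in fact $-\dist_q$ is not semiconcave, so its ``gradient curve'' is not even the right object.

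Second, and more importantly, your one-sided rate lemma (``for every $p\in F$ with $0<|pq|<\epsilon^2$ there is $\xi\in\Sigma_pF$ with $-\cos|\Uparrow_p^q\xi|\geqslant\epsilon$'') is not what the underlying geometry gives, and you have no mechanism to prove it. The relevant input is Lemma~\ref{lem3.4} (Lemma~3.1 of \cite{PP1}), which is \emph{symmetric}: for close $p,q\in X$, at least one of
\[
\max_{\eta\in\Sigma_qX}\dist_p'|_q(\eta)>\epsilon\quad\text{or}\quad \max_{\eta\in\Sigma_pX}\dist_q'|_p(\eta)>\epsilon
\]
holds---but possibly only one of the two. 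Combined with Proposition~\ref{prop2.1} this yields Lemma~\ref{lem3.5}: the steep direction can be taken in $\Sigma_\cdot F$, but still only at one of the two endpoints. Consequently the paper's construction is a \emph{two-endpoint} descent: starting from $(p_0,q_0)=(p,q)$, at each step one moves whichever of $p_i$ or $q_i$ admits the steep direction in $\Sigma_\cdot F$, producing sequences with $|p_{i-1}q_{i-1}|-|p_iq_i|>\epsilon(|p_{i-1}p_i|+|q_{i-1}q_i|)$ and $|p_iq_i|\to0$; telescoping gives total step length $<|pq|/\epsilon$, and a limit as the step size $\to0$ yields the curve. Your attempts to force a one-sided bound (invoking ``$q\notin F$''---but $q\in F$ here---or a shortest path in $F$ to ``the $\dist_q|_F$-minimizer''---which is $q$ itself) do not go through. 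The missing idea is precisely this symmetric two-point argument.
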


Moreover, we can see the following property which corresponds to Proposition 3.3 in \cite{PP1} for `extremal' case.

\begin{prop}\label{prop3.3}
Let $F$ be a quasi-convex subset in an $X\in\Alex(k)$, and let $p\in  F$ and $\xi\in \Sigma_{p} F$. Then there exists a curve in $F$ starting from $p$ and tangent to the direction $\xi$.
\end{prop}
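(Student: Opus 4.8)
The plan is to construct the desired curve as a limit of broken geodesics inside $F$, using Proposition \ref{prop3.2} to keep the construction under control near $p$. Since $\xi\in\Sigma_pF$, there is a sequence $r_i\in F$ with $r_i\to p$ and $\uparrow_p^{r_i}\to\xi$ as $i\to\infty$; fix a minimal geodesic $[pr_i]$ realizing this. First I would record the consequence of Proposition \ref{prop3.2} in a scale-invariant form: there is $\epsilon>0$ (depending on $X$) so that any two points $x,y\in F$ with $|xy|<\epsilon^2$ are joined by a curve in $F$ of length $\le|xy|/\epsilon$. The idea is to replace the geodesic $[pr_i]$, which need not lie in $F$, by a curve in $F$ that shadows it. Partition $[pr_i]$ into $N_i$ points $p=x_0^i,x_1^i,\dots,x_{N_i}^i=r_i$ at spacing $\delta_i\to 0$, and for each $j$ let $y_j^i\in F$ be a nearest point of $F$ to $x_j^i$, with $y_0^i=p$. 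Because $F$ is quasi-convex and $x_j^i$ lies on a geodesic emanating from $p\in F$, one controls $|y_j^ix_j^i|$; in fact the key estimate is that $|y_j^ix_j^i|=o(\delta_i)$ uniformly in $j$ as $i\to\infty$ (this is where the hypothesis that the geodesic direction converges to a direction of $\Sigma_pF$ enters — morally, $\tilde\angle_k(x_j^i\, p\, r)\le\pi/2+o(1)$ forces $\mathrm{dist}_F$ to vanish to higher order along $[pr_i]$). Consequently $|y_j^iy_{j+1}^i|\le\delta_i+o(\delta_i)<\epsilon^2$ for $i$ large, so Proposition \ref{prop3.2} connects consecutive $y_j^i$ by curves in $F$ of length $\le|y_j^iy_{j+1}^i|/\epsilon$; concatenating gives a curve $\gamma_i$ in $F$ from $p$ to $r_i$ of length $\le(1+o(1))|pr_i|/\epsilon$.

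Next I would extract a limit. Reparametrize each $\gamma_i$ proportionally to arclength on $[0,1]$; the length bound makes the family uniformly Lipschitz (with constant $\le(1+o(1))|pr_i|/\epsilon\to 0$, so after rescaling the parameter to $[0,L_i]$ by arclength, uniformly Lipschitz with constant $1$). By Arzelà--Ascoli, along a subsequence the arclength-parametrized curves $\gamma_i:[0,L_i]\to F$ converge uniformly on compact subsets to a $1$-Lipschitz curve $\gamma:[0,L]\to X$ (with $L$ a subsequential limit of $L_i$, possibly after a diagonal argument if $L_i\to\infty$, though here $L_i\to 0$ is the relevant regime — so instead one should rescale space or, more simply, run the construction on a fixed small length parameter). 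Cleaner: fix a small $\ell>0$ and take $\gamma_i$ only up to arclength $\ell$ (truncating the concatenation), landing at a point $z_i\in F$ with $|pz_i|\le\ell$; these truncations are $1$-Lipschitz curves $[0,\ell]\to F$, and since $F$ is closed the uniform limit $\gamma:[0,\ell]\to F$ is a curve in $F$ starting at $p$. Finally I would verify $\gamma$ is tangent to $\xi$ at $p$, i.e.\ $\lim_{t\to 0^+}\uparrow_p^{\gamma(t)}=\xi$ (equivalently $\lim_{t\to 0}|p\gamma(t)|/t=1$ with the direction converging). This follows because each $\gamma_i$ shadows $[pr_i]$ to first order: the point $\gamma_i(t)$ is within $o(t)$ of the point at distance $t$ along $[pr_i]$, whose direction from $p$ tends to $\xi$; passing to the limit and using that angle comparison is continuous (Lemma \ref{lem1.2}) gives $|\Uparrow_p^{\gamma(t)}\xi|\to 0$.

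The main obstacle is the uniform higher-order estimate $|y_j^ix_j^i|=o(\delta_i)$ — equivalently, that the construction does not accumulate error faster than the geodesic length. One has to argue that for a point $x$ at distance $s$ along $[pr_i]$, the nearest-point distance $\mathrm{dist}_F(x)$ is not merely $O(s)$ but genuinely $o(s)$, uniformly as $i\to\infty$; this is exactly the kind of statement Proposition \ref{prop3.2} (applied at the point of $F$ nearest $x$, noting quasi-convexity forces the relevant comparison angle to be $\le\pi/2$) is designed to give, but threading the quantifiers so that the single $\epsilon$ from Proposition \ref{prop3.2} controls every scale simultaneously — and so the concatenated length stays $\le(1+o(1))|pr_i|/\epsilon$ rather than blowing up — requires care. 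A secondary point is making the tangency statement precise when $\Uparrow_p^{\gamma(t)}$ is multivalued; one handles this by working with $\tilde\angle_k$ and $\liminf/\limsup$ as in Lemma \ref{lem1.2} and invoking that $\gamma_i\to\gamma$ together with $\uparrow_p^{r_i}\to\xi$ forces the directions of $\gamma$ into $\Uparrow_p$-limits of $\xi$, which is $\{\xi\}$ by uniqueness of the first-order data.
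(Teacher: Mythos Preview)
Your construction has a structural problem that makes the limit trivial. The curves $\gamma_i$ you build join $p$ to $r_i$ inside $F$ and have length $L_i\le |pr_i|/\epsilon$; since $r_i\to p$, we have $L_i\to 0$. Your proposed fix, ``truncate each $\gamma_i$ at arclength $\ell$ for some fixed small $\ell>0$'', is not available: for all large $i$ the curve $\gamma_i$ never reaches arclength $\ell$, so there is nothing to truncate, and any Arzel\`a--Ascoli limit is the constant curve at $p$. The paper avoids exactly this by building curves that start at $p_i$ and run \emph{away} from $p$ (along what is essentially the gradient flow of $\dist_p$, using Corollary~C to keep each step inside $F$) out to a fixed scale $\approx |pp_{i_0}|/2$; only then does the limit as $p_i\to p$ produce a nondegenerate curve from $p$.

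The second gap is the shadowing estimate you flag as the ``main obstacle''. As written, $|y_j^i x_j^i|=o(\delta_i)$ uniformly in $j$ is false: for fixed $i$ the midpoint $x$ of $[pr_i]$ has $\dist_F(x)$ equal to some fixed (possibly positive) number independent of the mesh $\delta_i$, so $|y_j^i x_j^i|/\delta_i$ typically blows up as the partition is refined. What you would actually need is $\dist_F(x)=o(|px|)$ for $x\in[pr_i]$, uniformly in $i$; this is a first-order statement about how $F$ hugs geodesics between its points, and Proposition~\ref{prop3.2} (which only bounds intrinsic lengths in $F$ by $1/\epsilon$ times extrinsic distance) does not give it. Even granting such an estimate, the concatenated curve has length $\approx |pr_i|/\epsilon$, so at arclength $t$ it is near the geodesic point at distance $\approx \epsilon t$, not $t$; the claimed ``$\gamma_i(t)$ is within $o(t)$ of the point at distance $t$ along $[pr_i]$'' does not follow. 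The paper's argument sidesteps all of this: rather than projecting an ambient geodesic onto $F$, it uses Corollary~C directly to produce at each step a direction in $\Sigma_\cdot F$ nearly opposite to $\Uparrow_\cdot^p$, which yields both the containment in $F$ and the angle control $|\Uparrow_p^{\alpha_i(t)}\xi|<\delta$ needed for tangency.
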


In proving Propositions 3.1 and 3.2, the following lemma (Lemma 3.1 in \cite{PP1}) is needed, which is some kind of essential geometry of Alexandrov spaces with lower curvature bound.

\begin{lemma}\label{lem3.4}
Let $X$ be a compact space in $\Alex(k)$. Then there is $\epsilon>0$ (depending on $X$) such that,
for any two distinct points $p, q \in X$ with $|p q|<\epsilon^{2}$, at least one of the following holds:
$$\max _{\eta \in \Sigma_{q} X}\operatorname{dist}_p'|_q(\eta)>\epsilon
\text{\ \ and }\ \max _{\eta \in \Sigma_{p} X}\operatorname{dist}_q'|_p(\eta)>\epsilon,$$
where $\operatorname{dist}_p'|_q(\eta)$ denotes the derivative of $\operatorname{dist}_p$ (the distance function to $p$) at $q$ along the direction $\eta$.
\end{lemma}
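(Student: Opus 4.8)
The plan is a proof by contradiction, passing to a blow–up limit. First recall, via the first variation formula, that $\max_{\eta\in\Sigma_qX}\dist_p'|_q(\eta)=-\cos R_q^p$ where $R_q^p:=\max_{\eta\in\Sigma_qX}|\eta\Uparrow_q^p|$; so ``$\max_\eta\dist_p'|_q(\eta)\le\epsilon$'' is equivalent to ``$R_q^p\le\arccos(-\epsilon)$'', and it expresses that $q$ is almost a critical point of $\dist_p$ (indeed $\max_\eta\dist_p'|_q(\eta)=|\nabla_q\dist_p|$ whenever this is positive).

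Suppose the assertion fails. Then for each $n$ there are distinct $p_n,q_n\in X$ with $\delta_n:=|p_nq_n|<1/n^2$ and $R_{q_n}^{p_n},\,R_{p_n}^{q_n}\le\arccos(-1/n)$. Rescale: $X_n:=\tfrac1{\delta_n}X\in\Alex(k\delta_n^2)$, so $|p_nq_n|_{X_n}=1$. Since $X$ is compact, $\diam X_n=\diam X/\delta_n\to\infty$ while the lower curvature bounds $k\delta_n^2\to0$; so by Gromov's compactness theorem (for pointed spaces with a uniform lower curvature bound) a subsequence satisfies $(X_n,p_n)\to(K,\bar p)$ in the pointed Gromov–Hausdorff topology, with $K$ complete, noncompact, in $\Alex(0)$; put $\bar q:=\lim q_n\in K$, so $|\bar p\bar q|=1$ and $\bar p\ne\bar q$. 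Using the stability of ``the gradient of a distance function is small'' under such convergence (upper semicontinuity of gradient norms, together with convergence of the spaces of directions and of $\Uparrow_{p_n}^{q_n}$ and the angle monotonicity of Lemma~\ref{lem1.1}), the two hypotheses pass to the limit as $R_{\bar p}^{\bar q}\le\tfrac\pi2$ and $R_{\bar q}^{\bar p}\le\tfrac\pi2$; that is, in $K$ the point $\bar p$ is a critical point of $\dist_{\bar q}$ and $\bar q$ is a critical point of $\dist_{\bar p}$.

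It remains to exclude this. A blow–up limit such as $K$ is a metric cone $C(\Lambda)$; let $o$ be its apex. If $\bar p\ne o$, criticality of $\bar p$ for $\dist_{\bar q}$ forces the radial direction $\uparrow_{\bar p}^{o}$ and its antipode in $\Sigma_{\bar p}K$ (which are at distance $\pi$) each to be within $\tfrac\pi2$ of $\Uparrow_{\bar p}^{\bar q}$, hence at distance exactly $\tfrac\pi2$; by the first variation formula this means the derivative of $\dist_{o}$ at $\bar p$ along some minimal $[\bar p\bar q]$ vanishes, i.e.\ $\bar p$ is the closest point of that geodesic to $o$, and symmetrically $\bar q$ is the closest point of some minimal $[\bar q\bar p]$ to $o$. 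As $\dist_o$ is strictly convex along any geodesic avoiding $o$ (a characterization of cones), it is strictly increasing along $[\bar p\bar q]$ away from $\bar p$ and along $[\bar q\bar p]$ away from $\bar q$, giving $|\bar po|<|\bar qo|<|\bar po|$, a contradiction; the cases $\bar p=o$ (or a minimal geodesic through $o$) are excluded directly, since then the antipodal radial direction at $\bar q$ shows $R_{\bar q}^{o}=\pi$, so $\bar q$ is not critical for $\dist_{\bar p}$. (As an alternative to identifying $K$ as a cone, one can take a ray $\gamma$ from $\bar p$, available since $K$ is noncompact, use criticality of $\bar p$ to make the hinge angle at $\bar p$ between $\gamma$ and $[\bar p\bar q]$ at most $\tfrac\pi2$, and then show $\tilde\angle_0\bar p\bar q\gamma(t)>\tfrac\pi2$ for large $t$, contradicting criticality of $\bar q$ for $\dist_{\bar p}$.)

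The main obstacle is the technical underpinning of this last step. In the cone route one must secure that the blow–up limit $K$ really is a metric cone — which one reduces, by first rescaling at scale $|p_n x|$ with $x=\lim p_n$ when $|p_nx|/\delta_n\to\infty$, to a tangent cone of a tangent cone. In the alternative route the difficulty is that the lower curvature bound only bounds $|\bar q\gamma(t)|$ from below, so to force the comparison angle at $\bar q$ above $\tfrac\pi2$ one must instead invoke convexity of the Busemann function $b_\gamma$ of nonnegative curvature, which yields $b_\gamma(\bar p)=0$ and controls how closely $\gamma$ approaches $\bar q$. A secondary point requiring care is making the passage of the gradient hypotheses to the limit rigorous in the possible presence of collapse; since $X$ is a fixed finite–dimensional space and the rescalings are non–collapsing at unit scale, this is standard but should be stated.
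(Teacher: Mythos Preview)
The paper does not give its own proof of this lemma; it quotes it as Lemma~3.1 of \cite{PP1} and uses it as a black box. There is thus no in-paper argument to compare against.

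Your blow-up strategy is the natural one and captures the right mechanism. The reduction to a noncompact $K\in\Alex(0)$ containing two distinct mutually critical points $\bar p,\bar q$ is correct, and when $K$ is a metric cone your apex argument (strict convexity of $\dist_o$ along geodesics off the apex, forcing $|\bar po|<|\bar qo|<|\bar po|$) is clean and valid. You also correctly identify securing the cone structure as the crux: the pointed limit of $(\delta_n^{-1}X,p_n)$ is guaranteed to be the tangent cone $T_xX$ only when $|p_nx|/\delta_n$ stays bounded along a subsequence, and your proposed fix via an intermediate rescaling at scale $|p_nx|$ does not work as written, since at that scale $p_n$ and $q_n$ coalesce in the limit. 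The ray alternative likewise stalls at a genuine rigidity case: combining both hinge comparisons along a ray $\gamma$ from $\bar p$ only yields $t^2-1\le|\bar q\gamma(t)|^2\le t^2+1$, forcing the angle at $\bar p$ to be exactly $\tfrac\pi2$ but no more; this is compatible with, say, $K=\mathbb R\times Y$ with $\bar p,\bar q$ mutually critical in a compact cross-section $Y$ --- and such $Y$ exist (antipodal points on a round sphere), so Busemann concavity alone will not finish it.

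In short, your write-up is a correct skeleton with an honestly flagged but unresolved gap at the final contradiction. To close it one either consults \cite{PP1} directly, or handles the rigid boundary case by a splitting/induction-on-dimension step (the equality case forces $K$ to split off a line, reducing the mutual-criticality problem to the lower-dimensional factor).
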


Note that $\dist_p'|_q(\eta)=-\cos|\Uparrow_{q}^p\eta|$ (by Lemma \ref{lem1.2}). Hence, by Proposition \ref{prop2.1} (see (\ref{eqn2.1}) and (\ref{eqn2.2})) we can easily see the following property.

\begin{lemma} \label{lem3.5}
In Lemma \ref{lem3.4}, if $p, q$ lie in a quasi-convex subset $F$ of $X$ additionally,  then
	$$\max _{\xi \in \Sigma_{q} F}\dist_p'|_q(\xi)>\epsilon \quad \text{and}\quad  \min _{\xi \in \Sigma_{q} F}\dist_p'|_q(\xi)<-\epsilon,$$
	or
	$$\max _{\xi \in \Sigma_{p} F}\dist_q'|_p(\xi)>\epsilon \quad \text{and}\quad  \min _{\xi \in \Sigma_{p} F }\dist_q'|_p(\xi)<-\epsilon.$$
\end{lemma}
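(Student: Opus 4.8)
The plan is to read both inequalities off directly from Proposition \ref{prop2.1}, using the identity $\dist_p'|_q(\eta)=-\cos|\Uparrow_q^p\eta|$ recorded just above the statement of Lemma \ref{lem3.5}; nothing beyond that proposition (and Lemma \ref{lem3.4}) will be needed.

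First I would invoke Lemma \ref{lem3.4}: for distinct $p,q\in F$ with $|pq|<\epsilon^2$ at least one of the two alternatives holds, and by the symmetry $p\leftrightarrow q$ it suffices to treat the case $\max_{\eta\in\Sigma_qX}\dist_p'|_q(\eta)>\epsilon$. Fix $\eta_0\in\Sigma_qX$ with $\dist_p'|_q(\eta_0)=-\cos|\Uparrow_q^p\eta_0|>\epsilon>0$. Note this already forces $\Sigma_qF\neq\emptyset$: if $q$ were an isolated point of $F$, the convention in Proposition \ref{prop2.1} for $\Sigma_qF=\emptyset$ would give $|\Uparrow_q^p\eta|\le\frac\pi2$, i.e. $\dist_p'|_q(\eta)\le 0$, for every $\eta\in\Sigma_qX$, contradicting the choice of $\eta_0$. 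Hence the maximum and minimum over $\Sigma_qF$ appearing in the conclusion are taken over a nonempty set and are meaningful.

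Now apply Proposition \ref{prop2.1} with base point $q$, with $p$ in the role of ``$r$'', and with $\eta=\eta_0$. Since $-\cos|\Uparrow_q^p\eta_0|>0$, inequality (\ref{eqn2.1}) provides $\xi\in\Sigma_qF$ with
$$\dist_p'|_q(\xi)=-\cos|\Uparrow_q^p\xi|\ \geqslant\ -\cos|\Uparrow_q^p\eta_0|\ >\ \epsilon,$$
so $\max_{\xi\in\Sigma_qF}\dist_p'|_q(\xi)>\epsilon$. For the companion inequality I would use the ``moreover'' part of Proposition \ref{prop2.1}, namely (\ref{eqn2.2}), which (as $F$ is quasi-convex and $-\cos|\Uparrow_q^p\eta_0|>0$) yields $\xi'\in\Sigma_qF$ with $\cos|\Uparrow_q^p\xi'|\geqslant-\cos|\Uparrow_q^p\eta_0|>\epsilon$; equivalently $\dist_p'|_q(\xi')=-\cos|\Uparrow_q^p\xi'|<-\epsilon$, so $\min_{\xi\in\Sigma_qF}\dist_p'|_q(\xi)<-\epsilon$. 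If instead it is the second alternative of Lemma \ref{lem3.4} that holds, the identical argument with the roles of $p$ and $q$ exchanged produces the second pair of inequalities in the conclusion.

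The argument is short and I do not foresee a genuine obstacle: once Proposition \ref{prop2.1} is in hand, Lemma \ref{lem3.5} is essentially a translation of (\ref{eqn2.1})–(\ref{eqn2.2}) into the language of $\dist'$. The only point demanding a line of care is the degenerate possibility $\Sigma_qF=\emptyset$ (resp. $\Sigma_pF=\emptyset$), which is ruled out above precisely because the corresponding alternative of Lemma \ref{lem3.4} cannot occur at an isolated point of $F$; this is what legitimizes writing ``$\max$'' and ``$\min$'' over $\Sigma_qF$ (resp. $\Sigma_pF$) in the statement.
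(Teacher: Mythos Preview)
Your proposal is correct and follows exactly the route the paper indicates: the paper merely says that the identity $\dist_p'|_q(\eta)=-\cos|\Uparrow_q^p\eta|$ together with Proposition \ref{prop2.1} (specifically (\ref{eqn2.1}) and (\ref{eqn2.2})) yields Lemma \ref{lem3.5}, and you have spelled out precisely those details, including the observation that the relevant alternative of Lemma \ref{lem3.4} forces $\Sigma_qF\neq\emptyset$.
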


Lemma \ref{lem3.5} corresponds to (1) of Corollary 3.2 in \cite{PP1} for `extremal' case, where $p$ and $q$ can lie in two distinct extremal subsets.

Since our proofs for Propositions 3.1 and 3.2 are imitations of their corresponding versions for `extremal' case in \cite{PP1}, we just provide rough proofs for them.

\begin{proof} [Proof of  Proposition \ref{prop3.1}]\

Let $F$ be a quasi-convex subset in $X$, and let $F_1$ and $F_2$ be two connected components  of $F$. It suffices to show that the distance between $F_1$ and $F_2$ is bigger than $\epsilon^2$,
where $\epsilon$ is the number associated to $X$ satisfying Lemma \ref{lem3.4}. If this is not true, then there is $p\in F_1$ and $q\in F_2$ such that $|pq|=|F_1F_2|<\epsilon^2$ (note that
$X$ is compact and $F$ is closed in $X$). It then follows that
$$\min_{\xi\in \Sigma_p F_1}\dist_q'|_p(\xi)\geqslant 0  \quad \text{and}\quad \min_{\xi\in \Sigma_q F_2}\dist_p'|_q(\xi)\geqslant 0,$$
which contradicts Lemma \ref{lem3.5}.
\end{proof}

\begin{proof} [Proof of  Proposition \ref{prop3.2}]\

Let $p,q$ be two distinct points in $F$. By Lemma \ref{lem3.5}, there is an $\epsilon$ such that if $|pq|<\epsilon^2$, then
$$
\min _{\xi \in \Sigma_{q} F} \operatorname{dist}_{p}^{\prime} |_{q}(\xi)<-\epsilon\ \text{ or } \
\min _{\xi \in \Sigma_{p} F} \operatorname{dist}_{q}^{\prime} |_{p}(\xi)<-\epsilon.
$$
Then, for each $n\in \Bbb N^+$,
it is not hard to see that there are two sequences of points $\{p_i\}_{i=1}^\infty$ and $\{q_i\}_{i=1}^\infty$ (depending on $n$)
in $F$ with
\begin{equation}\label{eqn3.1}
\text{either }\ \ p_i=p_{i-1} \text{ and } 0<|q_{i}q_{i-1}|<\frac1n\ \ \text{ or } \ \
q_{i}=q_{i-1} \text{ and } 0<|p_ip_{i-1}|<\frac1n
\end{equation}
(where $p_0=p$ and $q_0=q$) such that
\begin{equation}\label{eqn3.2}
|p_{i-1}q_{i-1}|-|p_iq_i|>\epsilon (|p_{i-1}p_i|+|q_iq_{i-1}|)
\end{equation}
and
\begin{equation}\label{eqn3.3}
|p_iq_i|\to 0 \text{ as } i\to\infty.
\end{equation}
Let $\{p_{i,n}, q_{i,n}\}_{i=0}^\infty$ denote the above two point sequences corresponding to each $n$.
Note that $$\sum_{i=1}^\infty(|p_{i-1,n}p_{i,n}|+|q_{i,n}q_{i-1,n}|)<\frac{|pq|}{\epsilon}$$ for all $n$.
Hence, as $n\to \infty$ and passing to a subsequence of $\{n\}$,
$\{p_{i,n}, q_{i,n}\}_{i=0}^\infty$ converges to a curve in $F$ jointing $p$ and $q$ with length $\leqslant\frac{|pq|}{\epsilon}$.
\end{proof}

\begin{remark}\label{rem3.6}{\rm  In the proof of Proposition \ref{prop3.2}, for general $\{p_i\}_{i=1}^\infty$ and $\{q_i\}_{i=1}^\infty$ satisfying (\ref{eqn3.1}) and (\ref{eqn3.2}), it might occur that
$p_i\to\bar p$ and $q_i\to\bar q$ as $i\to\infty$  with $\bar p\neq\bar q$. Note that $|\bar p\bar q|<|pq|<\epsilon^2$, so
similarly there is $p'$ and $q'$ in $F$ with either $p'=\bar p$ and $|q'\bar q|<\frac1n$ or
$q'=\bar q$ and $|p'\bar p|<\frac1n$ such that
$|\bar p\bar q|-|p'q'|>\epsilon (|\bar pp'|+|q'\bar q|)$. Hence, for sufficiently large $i$, we can reset
$p_i=p_{i-1}$ and $q_i=q'$ or $q_i=q_{i-1}$ and $p_i=p'$ so that the new $p_i$ and $q_i$ still satisfy (\ref{eqn3.1}) and (\ref{eqn3.2}). Such an idea enables us to find $\{p_i\}_{i=1}^\infty$ and $\{q_i\}_{i=1}^\infty$ satisfying (\ref{eqn3.1})-(\ref{eqn3.3}). }
\end{remark}

As for Proposition \ref{prop3.3}, we can almost directly copy the proof of Proposition 3.3 in \cite{PP1} (here the basis is Lemma \ref{lem3.5}
instead of Corollary 3.2 in \cite{PP1}). However, we would like to provide a proof for it via Corollary C without involving Lemmas \ref{lem3.5} and \ref{lem3.4}.

\begin{proof} [Proof of  Proposition \ref{prop3.3}]\

Since $\xi\in\Sigma_pF$, there is $\{p_i\}_{i=1}^\infty\subset F$ such that $p_i\to p$ and $\Uparrow_p^{p_i}\to\xi$ as $i\to\infty$. Note that, for sufficiently small $\delta>0$, there is $i_0$ such that
\begin{equation} \label{eqn3.4}
|\Uparrow_p^{p_i}\Uparrow_p^{p_{i_0}}|<\frac{\delta}3 \text{ for all } i>i_0.
\end{equation}
Moreover, by Lemma \ref{lem1.1}, we can assume that
\begin{equation} \label{eqn3.5}
|\uparrow_p^x\uparrow_p^y|-\tilde{\angle }_k x p y<\frac{\delta}{3} \text{ for any } [px], [py]\subset\overline{B_{p}(|pp_{i_0}|)}.
\end{equation}

In the rest of the proof, for any $i>i_0$ with $|pp_i|\ll|pp_{i_0}|$, we will first construct an arc-length parameterized curve $\alpha_i(t)|_{t\in[0,\frac{|p_ip_{i_0}|}{2}]}$ $\subset F$ with $\alpha_i(0)=p_i$ such that
$$|\Uparrow_p^{\alpha_i(t)}\xi|<\delta \text{ for all } t\in[0, \frac{|p_ip_{i_0}|}{2}].$$
And then we will show that, as $i\to\infty$, $\alpha_i(t)$ converges to a curve we want.

In order to construct $\alpha_i(t)$, we claim that, for any $\epsilon\ll |pp_i|$, there is $\{z_j\}_{j=1}^{N(\epsilon)}\subset F$ with $z_1=p_i$ and a constant $C$ such that
$$|z_jz_{j+1}|<\epsilon,\ \left|\sum_{j=1}^{N(\epsilon)-1}|z_jz_{j+1}|-\frac{|p_ip_{i_0}|}{2}\right|<\epsilon \text{ and } \tilde\angle_kz_{j+1}pp_{i_0}<\tilde\angle_kz_jpp_{i_0}+\frac{C\epsilon}{|pp_i|}|z_jz_{j+1}|.$$
Note that (\ref{eqn3.4}) implies that $\tilde\angle_kz_1pp_{i_0}<\frac\delta3$ (by Toponogov's Theorem) and
$|\Uparrow_p^{p_{i_0}}\xi|\leqslant\frac{\delta}3$. Then taking into account (\ref{eqn3.5}), we can see that
$\{z_j\}_{j=1}^{N(\epsilon)}$ converges to the desired $\alpha_i(t)$ as $\epsilon\to 0$.

We now verify the above claim. Since $\tilde\angle_kp_ipp_{i_0}<\frac{\delta}3$
and $\tilde\angle_kpp_{i_0}p_i\ll \frac{\delta}3$ (note that $|pp_i|\ll |p_ip_{i_0}|$), we can assume that $\tilde\angle_kpp_ip_{i_0}>\pi-\delta$, and thus by Toponogov's Theorem we have that
$$|\Uparrow_{p_i}^{p}\Uparrow_{p_i}^{p_{i_0}}|>\pi-\delta.$$
By Corollary C, the farthest direction $\xi_i$ to $\Uparrow_{p_i}^p$ in $\Sigma_{p_i}X$ belongs to $\Sigma_{p_i}F$.
By Lemma \ref{lem1.2} and Toponogov's Theorem, the `farthest' property of $\xi_i$ implies that for any $\eta\in\Uparrow_{p_i}^{p_{i_0}}$ there is  $\zeta\in\Uparrow_{p_i}^{p}$
such that $\tilde\angle_1\zeta\xi_i\eta\leqslant\frac\pi2$. It then follows that
$$
\cos |\Uparrow_{p_i}^p\Uparrow_{p_i}^{p_{i_0}}|\geqslant\cos |\zeta\eta|\geqslant \cos|\xi_i\zeta|\cdot\cos|\xi_i\eta|\geqslant \cos|\xi_i\zeta|\cdot\cos|\xi_i\Uparrow_{p_i}^{p_{i_0}}|
$$
(note that $|\xi_i\zeta|\geqslant |\xi_i\Uparrow_{p_i}^{p}|\geqslant |\Uparrow_{p_i}^{p}\Uparrow_{p_i}^{p_{i_0}}|>\pi-\delta$, and thus it holds that $|\xi_i\eta|<\frac\pi2$),
which implies that
\begin{equation}\label{eqn3.6}
\cos |\Uparrow_{p_i}^p\Uparrow_{p_i}^{p_{i_0}}|\geqslant -\cos|\xi_i\Uparrow_{p_i}^{p_{i_0}}|,
\text{ or equivalently, } |\Uparrow_{p_i}^p\Uparrow_{p_i}^{p_{i_0}}|+|\xi_i\Uparrow_{p_i}^{p_{i_0}}|\leqslant\pi.
\end{equation}
Denote by $z_1$ the point $p_i$, and note that $\xi_i\in\Sigma_{z_1}F$. Then for any $\epsilon\ll |pz_1|$, (\ref{eqn3.6}) together with Lemma \ref{lem1.2} implies that there is $z_2\in F$ such that $|z_1z_2|<\epsilon$ and
\begin{equation}\label{eqn3.7}
\tilde\angle_kpz_1p_{i_0}+\tilde\angle_kp_{i_0}z_1z_2<\pi+\epsilon.
\end{equation}
Let $\bar z_2,\bar p,\bar p_{i_0}\in \Bbb S_k^2$ satisfy $|\bar p\bar p_{i_0}|=|pp_{i_0}|$, $|\bar z_2\bar p_{i_0}|=|z_2p_{i_0}|$ and $|\bar p\bar z_2|=|pz_1|+|z_1z_2|$.
For a special case of (\ref{eqn3.7}) where $\tilde\angle_kpz_1p_{i_0}+\tilde\angle_kp_{i_0}z_1z_2\leqslant\pi$,
we notice that $\angle \bar z_2\bar p\bar p_{i_0}\leqslant\tilde\angle_kz_1pp_{i_0}\ (<\frac\delta3)$
by Alexandrov's lemma (Lemma 2.5 in \cite{BGP}),
which implies $\angle \bar p\bar z_2\bar p_{i_0}>\pi-\delta\ (>\frac\pi2)$. From `$\angle \bar p\bar z_2\bar p_{i_0}>\frac\pi2$',
it follows that $\tilde\angle_kz_2pp_{i_0}\leqslant\angle \bar z_2\bar p\bar p_{i_0}$ (note that $|pz_2|\leqslant|\bar p\bar z_2|$), so
\begin{equation}\label{eqn3.8}
\tilde\angle_kz_2pp_{i_0}<\tilde\angle_kz_1pp_{i_0} \text{ and } \tilde\angle_kpz_2p_{i_0}>\pi-\delta.
\end{equation}
In general, it is not so hard to conclude that
$\angle \bar z_2\bar p\bar p_{i_0}<\tilde\angle_kz_1pp_{i_0}+\frac{C\epsilon}{|pp_i|}|z_1z_2|$, where
$C$ is a constant depending only on $|pp_{i_0}|$.
Then we can similarly see that
\begin{equation}\label{eqn3.9}
\tilde\angle_kz_2pp_{i_0}\leqslant\angle \bar z_2\bar p\bar p_{i_0}<\tilde\angle_kz_1pp_{i_0}+\frac{C\epsilon}{|pp_i|}|z_1z_2| \text{ and } \tilde\angle_kpz_2p_{i_0}>\pi-\delta.
\end{equation}
Note that $|\Uparrow_{z_1}^p\xi_i|>\pi-\delta$, which implies that $|pz_2|>|pz_1|$ (by Lemma \ref{lem1.2}).
Then based on (\ref{eqn3.8}) and (\ref{eqn3.9}), we can similarly locate $z_j$ with $j\geqslant 3$ one by one.
Moreover, for a similar reason in Remark \ref{rem3.6}, there is an $N(\epsilon)$ such that
$\left|\sum_{j=1}^{N(\epsilon)-1}|z_jz_{j+1}|-\frac{|p_ip_{i_0}|}{2}\right|<\epsilon$. So far, the claim has been verified.

Note that  $\alpha_i(t)$ converges to a curve $\alpha(t)|_{t\in[0,\frac{|pp_{i_0}|}{2}]}$ with $\alpha(0)=p$ as $i\to\infty$ (here, there might be a need of passing to a subsequence, but in fact not; cf. Remark \ref{rem3.7} below). We just need to show that $\alpha(t)$ is tangent to $\xi$ at $p$.
In fact, for any integer $n>1$, there is $i_n\gg i_0$ such that
$|\Uparrow_p^{p_i}\Uparrow_p^{p_{i_n}}|<\frac{\delta^n}3 \text{ for all } i>i_n$ (cf. (\ref{eqn3.4})).
Then we can similarly construct another curve
$\bar\alpha_i(t)|_{t\in[0,\frac{|p_ip_{i_n}|}{2}]}\subset F$ with $\bar\alpha_i(0)=p_i$ such that
$|\Uparrow_p^{\bar\alpha_i(t)}\xi|<\delta^n \text{ for all } t\in[0, \frac{|p_ip_{i_n}|}{2}]$.
Note that $z_j$ can be chosen to ensure that  $\bar\alpha_i(t)|_{t\in[0,\frac{|p_ip_{i_n}|}{2}]}=\alpha_i(t)|_{t\in[0,\frac{|p_ip_{i_n}|}{2}]}$;
namely, for sufficiently large $i$ we have that
$$|\Uparrow_p^{\alpha_i(t)}\xi|<\delta^n \text{ for all } t\in[0, \frac{|p_ip_{i_n}|}{2}].$$
This implies that $|\Uparrow_p^{\alpha(t)}\xi|<\delta^n \text{ for all } t\in(0, \frac{|pp_{i_n}|}{2}]$,
i.e. $\alpha(t)$ is tangent to $\xi$ at $p$.
\end{proof}

\begin{remark}\label{rem3.7}{\rm
We would like to point out that (\ref{eqn3.8}) and (\ref{eqn3.9}) are inspired by the proof of Proposition 3.3 in \cite{PP1}.
Moreover, in the proof right above, $\alpha_i(t)$ is in fact the beginning part of the gradient curve of $\dist_p$ starting from $p_i$, 
and $\alpha(t)$ is just the beginning part of the radial curve starting from $p$ with direction $\xi$ (cf. Corollary \ref{coro4.2} below).}
\end{remark}

%%%%%%%%%%%%%%%%%%%%%%%%%%%%%%%%%%%%%%%%%%%%%%%%%%%%%%%%%%%%%%%%%%%%%%%%%%%%%%%%%%%%%%%%%%%%%

\section{Gradient curve and its applications}

\subsection{Gradient and radial curves}

In \cite{Pet}, gradient curve has been introduced for semi-concave functions,
distance functions in particular, on an $X\in \Alex(k)$. Let $p$ and $q$ be two distinct points in $X$, and let $\alpha(t)|_{t\in[0,+\infty)}$ be a continuous curve in $X$ with $\alpha(0)=q$.
Then $\alpha(t)|_{t\in[0,+\infty)}$ is the gradient curve of $\dist_p$ starting from $q$
if and only if, for each $t_0\in [0,+\infty)$, either $\Uparrow_{\alpha(t_0)}^{\alpha(t)}$ converges to  $\eta\in\Sigma_{\alpha(t_0)}X$
as $t\to t_0^+$ with
$$|\Uparrow_{\alpha(t_0)}^{p}\eta|=\max\{|\Uparrow_{\alpha(t_0)}^{p}\nu| : \nu\in \Sigma_{\alpha(t_0)}X\}>\frac\pi2$$
(more precisely, it is required that $\dist_p'(\alpha(t))|_{t=t_0^+}=-\cos|\Uparrow_{\alpha(t_0)}^{p}\eta|$), or $\alpha(t)\equiv\alpha(t_0)$ for all $t>t_0$ and there is no $\nu\in \Sigma_{\alpha(t_0)}X$ such that $|\Uparrow_{\alpha(t_0)}^{p}\nu|>\frac\pi2$.
Moreover, given $\xi\in\Sigma_pX$ and $\{p_i\}_{i=1}^\infty$ with $p_i\to p$ and $\Uparrow_p^{p_i}\to\xi$
as $i\to\infty$, we can define the radial curve starting from $p$ with direction $\xi$ by
the limit of gradient curves of $\dist_p$ starting from $p_i$ (\cite{Pet}).
Consequently, similar to `extremal' case (cf. Sections 3.1 and 4.1 in \cite{Pet}), we can see
the following two properties from Corollary C.

\begin{prop}\label{prop4.1}
Let $F$ be a subset in an $X\in \Alex(k)$. Then $F$ is quasi-convex if and only if, for any two distinct points $p,r\in F$, the gradient curve of $\dist_p$ starting from $r$ lies in $F$.
\end{prop}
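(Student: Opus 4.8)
The whole proof runs on Corollary C and on the observation that the \emph{farthest direction to $\Uparrow_p^r$ in $\Sigma_pX$} is precisely the initial direction of the gradient curve of $\dist_r$ at $p$ (the direction of $\nabla_p\dist_r$); since quasi-convexity presupposes closedness, $F$ is taken closed throughout. \emph{Sufficiency.} Assume every gradient curve of $\dist_a$ from $b$, with $a,b\in F$ distinct, lies in $F$. To apply Corollary C, take distinct $p,r\in F$ and suppose some $\eta\in\Sigma_pX$ satisfies $|\eta\Uparrow_p^r|>\frac\pi2$; then the farthest direction $\xi_0$ to $\Uparrow_p^r$ in $\Sigma_pX$ has $|\xi_0\Uparrow_p^r|>\frac\pi2$, so $p$ is not critical for $\dist_r$ and the gradient curve $\gamma$ of $\dist_r$ issuing from $p$ is nonconstant near $0$ with $\Uparrow_p^{\gamma(t)}\to\xi_0$ as $t\to0^+$. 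By hypothesis $\gamma(t)\in F$ and $\gamma(t)\to p$, hence $\xi_0\in\Sigma_pF$, and $F$ is quasi-convex by Corollary C.

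\emph{Necessity.} Let $F$ be quasi-convex, fix distinct $p,r\in F$, and let $\alpha(t)|_{t\in[0,\infty)}$ be the gradient curve of $\dist_p$ from $r$. As $F$ is closed, $T:=\sup\{t:\alpha([0,t])\subset F\}$ satisfies $\alpha([0,T])\subset F$, so it suffices to rule out $T<\infty$. Set $x:=\alpha(T)\in F$; since $\dist_p\circ\alpha$ is nondecreasing and $\geqslant\dist_p(r)>0$, we have $x\ne p$. If $x$ is critical for $\dist_p$ then $\alpha\equiv x$ on $[T,\infty)$, so $\alpha([0,\infty))\subset F$, contradicting $T<\infty$; otherwise the farthest direction $\xi_*$ to $\Uparrow_x^p$ in $\Sigma_xX$ has $|\xi_*\Uparrow_x^p|>\frac\pi2$, and Corollary C gives $\xi_*\in\Sigma_xF$. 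Hence everything reduces to the local statement: \emph{if $x\in F$ is noncritical for $\dist_p$ and $p\in F$, then the gradient curve of $\dist_p$ from $x$ lies in $F$ on a short time interval} --- which contradicts maximality of $T$.

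This local statement I would prove with the $F$-constrained Euler scheme that already appears in the proof of Proposition \ref{prop3.3} (see Remark \ref{rem3.7}): starting at $z_0=x$ with small step $\tau$, from $z_k\in F\setminus\{p\}$ let $v_k$ be the farthest direction to $\Uparrow_{z_k}^p$ in $\Sigma_{z_k}X$ --- which by Corollary C lies in $\Sigma_{z_k}F$ --- and, using $v_k\in\Sigma_{z_k}F$, pick $z_{k+1}\in F$ with $|z_kz_{k+1}|$ of order $\tau$ and $|\Uparrow_{z_k}^{z_{k+1}}v_k|$ below any prescribed tolerance (stop if some $z_k$ becomes critical). The resulting broken curves lie in $F$, have uniformly bounded length, and, advancing at each step along an arbitrarily good approximation of $\nabla\dist_p$, converge as $\tau\to0$ to the gradient curve of $\dist_p$ from $x$; since $F$ is closed, the limit lies in $F$ (a point becoming critical in the limit simply makes the curve stop there, inside $F$).

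The main obstacle is exactly this convergence: one must check that the angular discretization errors do not accumulate and that the limit obeys the defining differential relation of the $\dist_p$-gradient curve. This is done just as in the extremal case in \cite{Pet}, with Corollary C --- which keeps the gradient direction of $\dist_p$ inside $\Sigma_{z_k}F$, so that the scheme can be kept in $F$ --- playing the role of the extremal property; concretely it rests on the comparison estimates leading to \eqref{eqn3.8} and \eqref{eqn3.9}. An alternative for the necessity direction is a Gr\"onwall argument on $t\mapsto\dist_F(\alpha(t))$: its upper right derivative at $y=\alpha(t)\notin F$ with nearest point $q\in F$ is at most $(-\cos|\Uparrow_y^q v_y|)\,|\nabla_y\dist_p|$, and Corollary C at $q$ together with the nearest-point condition at $q$ and the $\lambda$-concavity of $\dist_p$ would give $-\cos|\Uparrow_y^q v_y|\leqslant C\,\dist_F(y)$, whence $\dist_F\circ\alpha\equiv0$; the delicate point there is precisely that last angle estimate.
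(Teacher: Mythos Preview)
Your proposal is correct and follows essentially the same route the paper indicates: the paper does not spell out a proof of Proposition~\ref{prop4.1} but merely states that it ``can be seen from Corollary~C'' by analogy with the extremal case in \cite{Pet}, and your argument is precisely that analogy made explicit. Your sufficiency direction (gradient direction $=$ farthest direction, hence lies in $\Sigma_pF$, hence Corollary~C applies) and your necessity direction (maximal-time argument reduced to a local statement, then the $F$-constrained Euler scheme of the proof of Proposition~\ref{prop3.3}) are exactly what the paper has in mind---Remark~\ref{rem3.7} even says outright that the curves $\alpha_i$ built there \emph{are} the gradient curves of $\dist_p$.

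One small remark: you take $F$ closed throughout, whereas the paper notes after Proposition~\ref{prop4.1} that closedness need not be assumed. This does not affect the substance of your argument---for the necessity direction $F$ is closed by definition of quasi-convexity, and for the sufficiency direction your use of Corollary~C only requires that the farthest direction lie in $\Sigma_pF$, which your gradient-curve hypothesis delivers regardless of whether $F$ is closed; the verification that $F$ is then automatically closed (so that Definition~\ref{def0.1} is satisfied in full) is a separate small point the paper leaves implicit.
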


\begin{coro}\label{coro4.2}
Let $F$ be a quasi-convex subset in an $X\in\Alex(k)$. Then, for any $p \in F$ and  any $\xi \in \Sigma_{p} F$, the radial curve starting from $p$ with direction $\xi$ lies in $F$.
\end{coro}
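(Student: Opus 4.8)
The plan is to deduce this directly from Proposition \ref{prop4.1} together with the definition of the radial curve as a limit of gradient curves. First I would fix $p\in F$ and $\xi\in\Sigma_pF$. By the definition of $\Sigma_pF$, there is a sequence $\{p_i\}_{i=1}^\infty\subset F$ with $p_i\to p$ and $\Uparrow_p^{p_i}\to\xi$ as $i\to\infty$; by the construction recalled in Section 4.1, the radial curve $\alpha(t)$ starting from $p$ with direction $\xi$ is (up to passing to a subsequence of $\{i\}$) the limit of the gradient curves $\alpha_i(t)$ of $\dist_p$ starting from $p_i$. Since $p$ and each $p_i$ are distinct points of $F$, Proposition \ref{prop4.1} gives that $\alpha_i(t)\subset F$ for every $i$.

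The second step is to pass to the limit. Because $F$ is closed in $X$, a pointwise (indeed locally uniform, by the standard continuous dependence of gradient curves on initial data in \cite{Pet}) limit of curves lying in $F$ again lies in $F$. Hence $\alpha(t)\in F$ for all $t$ in its domain, which is exactly the assertion. I would also remark, paralleling Remark \ref{rem3.7}, that this recovers and explains the curve constructed by hand in the proof of Proposition \ref{prop3.3}.

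The only genuine subtlety — and the step I would be most careful about — is the degenerate possibility $p_i=p$ (equivalently $p$ being reached in finite time, or the gradient of $\dist_p$ vanishing): if $p_i=p$ for infinitely many $i$, or more generally if some $\alpha_i$ becomes constant, one must check that the limiting radial curve is still the constant curve at $p$ and that this is consistent with $\xi\in\Sigma_pF$; this is handled just as in the `extremal' case in \cite{Pet}, using that $\Sigma_pF$ is itself quasi-convex in $\Sigma_pX$ (Remark \ref{rem0.6}) so that the behaviour of the radial curve at $p$ is governed by $\xi\in\Sigma_pF$ rather than by $\Sigma_pX$. Apart from this bookkeeping, no new estimate beyond Proposition \ref{prop4.1} and the closedness of $F$ is required.
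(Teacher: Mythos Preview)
Your proposal is correct and matches the paper's (only implicitly stated) approach: the paper simply declares that Corollary~\ref{coro4.2} follows from Corollary~C ``similar to the `extremal' case'' in \cite{Pet}, and the mechanism is exactly the one you spell out---radial curves are limits of gradient curves, the latter lie in $F$ by Proposition~\ref{prop4.1}, and $F$ is closed.

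One small remark: your third paragraph is unnecessary and slightly off-target. The case $p_i=p$ cannot occur, since $\xi\in\Sigma_pF$ by definition means there exist $p_i\in F\setminus\{p\}$ with $p_i\to p$ and $\Uparrow_p^{p_i}\to\xi$; and the gradient curve of $\dist_p$ flows \emph{away} from $p$, so none of the $\alpha_i$ ever reaches $p$. If some $\alpha_i$ becomes constant at a critical point of $\dist_p$, it is still a curve in $F$ and the limit argument goes through unchanged. No appeal to the quasi-convexity of $\Sigma_pF$ is needed here.
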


In Proposition \ref{prop4.1}, there is no need to assume that $F$ is closed in $X$; and for `extremal' case, $p$ can be an arbitrary point in $X$.
As another corollary of Proposition \ref{prop4.1}, we can see that the limit of quasi-convex subsets is also quasi-convex
(refer to Lemma 4.1.3 in \cite{Pet} for `extremal' case).

\begin{coro}\label{coro4.3}
Let $\{X_{n}\}_{n=1}^\infty$ be $m$-dimensional spaces in $\Alex(\kappa)$, and let $F_{n}$ be quasi-convex in $X_{n}$. If $X_{n} \stackrel{\text { GH }}{\longrightarrow} X$ with $F_{n} \rightarrow F \subset X$ as $n\to \infty$, then $F$ is also quasi-convex in $X$.
\end{coro}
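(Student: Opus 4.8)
The natural approach is to use the gradient-curve characterization of quasi-convexity from Proposition \ref{prop4.1}, together with the stability of gradient curves of distance functions under non-collapsing Gromov--Hausdorff convergence. If $F$ has at most one point it is quasi-convex by convention, so we may assume $F$ contains two distinct points. By Proposition \ref{prop4.1} (which, as noted there, does not require $F$ to be closed), it then suffices to prove that for any two distinct points $p,r\in F$ the gradient curve $\alpha(t)|_{t\in[0,+\infty)}$ of $\dist_p$ starting from $r$ is contained in $F$.

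First I would set up the approximation: since $X_n\to X$ and $F_n\to F$ in the Gromov--Hausdorff sense, choose $p_n,r_n\in F_n$ with $p_n\to p$ and $r_n\to r$, so that $|p_nr_n|\to|pr|>0$ and hence $p_n\ne r_n$ for all large $n$. Because $F_n$ is quasi-convex in $X_n$, Proposition \ref{prop4.1} applied in $X_n$ shows that the gradient curve $\alpha_n(t)|_{t\in[0,+\infty)}$ of $\dist_{p_n}$ starting from $r_n$ lies in $F_n$ (this covers also the degenerate case in which $\alpha_n$ is eventually constant, i.e.\ when at the current point there is no $\nu$ making angle $>\frac\pi2$ with the directions to $p_n$). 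Since all $X_n$ are $m$-dimensional and in $\Alex(\kappa)$, the convergence $X_n\to X$ is non-collapsing, so $X\in\Alex(\kappa)$ is again $m$-dimensional and the gradient curves $\alpha_n$ converge, uniformly on compact time intervals, to the gradient curve $\alpha$ of $\dist_p$ starting from $r$ in $X$ — this is the same mechanism underlying the definition of radial curves recalled in Section 4.1, and it parallels Lemma 4.1.3 in \cite{Pet} for the extremal case. As $\alpha_n(t)\in F_n$ for every $t$ and $F_n\to F$, passing to the limit gives $\alpha(t)\in F$ for every $t$. Thus the gradient curve of $\dist_p$ starting from $r$ lies in $F$, and Proposition \ref{prop4.1} yields that $F$ is quasi-convex in $X$.

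The main obstacle is precisely the stability statement for gradient curves under non-collapsing convergence: one has to verify that the limit of the curves $\alpha_n$ is genuinely the gradient curve of $\dist_p$ from $r$. This requires some uniformity — for instance uniform bounds on the relevant semi-concavity constants of $\dist_{p_n}$ near $r_n$ and control of the speeds $\dist_{p_n}'(\alpha_n(t))$ — so that the defining derivative condition $\dist_p'(\alpha(t))|_{t=t_0^+}=-\cos|\Uparrow_{\alpha(t_0)}^{p}\eta|$ recalled in Section 4.1 survives the limit, together with a uniform treatment of the transition into the degenerate (constant) regime. All of these ingredients are available from \cite{Pet}, so the remaining work is to assemble them rather than to establish them anew.
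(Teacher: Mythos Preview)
Your proposal is correct and follows essentially the same route as the paper: invoke Proposition~\ref{prop4.1}, approximate $p,r$ by $p_n,r_n\in F_n$, use quasi-convexity of $F_n$ to place the gradient curves $\alpha_n$ of $\dist_{p_n}$ in $F_n$, and pass to the limit using the stability of gradient curves (the paper cites Lemma~2.1.5 in \cite{Pet} for this). One inaccuracy to fix: the assertion that ``all $X_n$ are $m$-dimensional'' forces the convergence to be non-collapsing is false (e.g.\ thin flat tori collapse to a circle), but this does not affect the argument since the gradient-curve stability from \cite{Pet} holds without any non-collapsing hypothesis.
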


\begin{proof} By Proposition \ref{prop4.1}, we just need to verify that, for any two distinct points $p,r\in F$, the gradient curve of $\dist_p$ starting from $r$ lies in $F$. Let $p_n, r_n\in F_n$ with $p_{n}\to p$ and $r_n\to r$ as $n\to \infty$. It is clear that $\dist_{p_n}$ converges to $\dist_p$ as $n\to\infty$. A fundamental fact is that the gradient curve
of $\dist_{p_n}$ starting from $r_n$ converges to the gradient curve
of $\dist_{p}$ starting from $r$ (Lemma 2.1.5 in \cite{Pet}). Due to the quasi-convexity of $F_n$, the gradient curve
of $\dist_{p_n}$ starting from $r_n$ lies in $F_n$, so the gradient curve of $\dist_p$ starting from $r$ lies in $F$.
\end{proof}

\begin{remark}\label{rem4.4}{\rm If $X_n$ converges to $X$ without collapse (in the Gromov-Hausdorff sense) in Corollary \ref{coro4.3}, then similar to `extremal' case (cf. Section 4.1 in \cite{Pet}) one can
show that $F_{n}$ also converges to $F$ with respect to induced intrinsic metrics from $X_n$ and $X$.}
\end{remark}

In the rest of this section, we will provide three applications of Proposition \ref{prop4.1}.

\subsection{Intersection of two quasi-convex subsets}

Via Proposition \ref{prop4.1}, we can see that the intersection of two quasi-convex subsets is also quasi-convex.

\begin{prop}\label{prop4.5}
Let $F$ and $G$ be two quasi-convex subsets in an $X\in\Alex(k)$. Then $F\cap G$ is also quasi-convex in $X$;
moreover, $\Sigma_{p} (F\cap  G)=\Sigma_{p} F\cap \Sigma_{p}G$ for any $p\in F\cap G$.
\end{prop}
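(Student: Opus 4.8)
The plan is to use Proposition \ref{prop4.1} as the main tool, since it characterizes quasi-convexity purely in terms of gradient curves of distance functions. To show $F\cap G$ is quasi-convex, I take two distinct points $p,r\in F\cap G$ and consider the gradient curve $\alpha(t)$ of $\dist_p$ starting from $r$. Since $p,r\in F$ and $F$ is quasi-convex, Proposition \ref{prop4.1} gives $\alpha(t)\subset F$; since $p,r\in G$ and $G$ is quasi-convex, the same proposition gives $\alpha(t)\subset G$. Hence $\alpha(t)\subset F\cap G$, and by the `if' direction of Proposition \ref{prop4.1}, $F\cap G$ is quasi-convex. (One should note that $F\cap G$ is closed as an intersection of closed sets, though Proposition \ref{prop4.1} as stated does not even require closedness.)

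For the second statement, the inclusion $\Sigma_p(F\cap G)\subseteq\Sigma_pF\cap\Sigma_pG$ is immediate from the definition of the space of directions of a subset: any direction realized by a sequence of points of $F\cap G$ approaching $p$ is in particular realized by sequences in $F$ and in $G$. The substantive direction is $\Sigma_pF\cap\Sigma_pG\subseteq\Sigma_p(F\cap G)$. Here I would take $\xi\in\Sigma_pF\cap\Sigma_pG$ and invoke Corollary \ref{coro4.2}: since $F$ is quasi-convex and $\xi\in\Sigma_pF$, the radial curve starting from $p$ with direction $\xi$ lies in $F$; since $G$ is quasi-convex and $\xi\in\Sigma_pG$, the same radial curve lies in $G$. (Because the radial curve is defined intrinsically by the limit of gradient curves of $\dist_p$ starting from $p_i\to p$ with $\Uparrow_p^{p_i}\to\xi$, it does not depend on which subset we regard $\xi$ as living in — it is the same curve in $X$ in both cases.) Thus the radial curve lies in $F\cap G$, and since it starts at $p$ and is tangent to $\xi$, the direction $\xi$ is a limit of directions $\Uparrow_p^{\alpha(t)}$ with $\alpha(t)\in F\cap G$ and $\alpha(t)\to p$; hence $\xi\in\Sigma_p(F\cap G)$.

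The main obstacle is making precise that the radial curve attached to $\xi$ really is the same object whether one builds it inside $F$, inside $G$, or in $X$ — i.e., that it depends only on $(p,\xi)$ and the ambient space $X$, not on an auxiliary choice of approximating sequence. This is exactly the content of the definition of radial curve recalled in Section 4.1 (following \cite{Pet}): it is the limit of gradient curves of $\dist_p$ in $X$ starting from $p_i$ with $\Uparrow_p^{p_i}\to\xi$, and such limits exist and are well-defined; one may therefore fix a single sequence $\{p_i\}\subset X$ with $\Uparrow_p^{p_i}\to\xi$, and the resulting curve is forced into $F$ by quasi-convexity of $F$ (via Corollary \ref{coro4.2}) and into $G$ by quasi-convexity of $G$. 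A minor point to address is the degenerate case where $\xi$ is an isolated situation, but since $\xi\in\Sigma_pF$ presupposes $\Sigma_pF\neq\emptyset$, and similarly for $G$, there is nothing special to handle; and if $p$ were isolated in $F\cap G$ the claimed identity $\Sigma_p(F\cap G)=\Sigma_pF\cap\Sigma_pG$ would read $\emptyset=\Sigma_pF\cap\Sigma_pG$, which the radial-curve argument still delivers by contradiction.
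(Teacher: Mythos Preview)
Your proposal is correct and follows essentially the same approach as the paper: use Proposition~\ref{prop4.1} on gradient curves to get quasi-convexity of $F\cap G$, note the obvious inclusion $\Sigma_p(F\cap G)\subseteq\Sigma_pF\cap\Sigma_pG$, and use Corollary~\ref{coro4.2} on radial curves for the reverse inclusion. Your additional remarks on well-definedness of the radial curve and the degenerate cases are sound but not needed beyond what the paper already assumes.
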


\begin{proof} Since $F$ and $G$ are quasi-convex in $X$, for any $p,r\in F\cap G$ with $p\neq r$,
the gradient curve of $\dist_p$ starting from $r$ belongs to both $F$ and $G$ (by Proposition \ref{prop4.1}), which
implies that $F\cap G$ is also quasi-convex (by Proposition \ref{prop4.1} again).

Next, for any $p\in F\cap G$, we show that $\Sigma_{p} (F\cap  G)=\Sigma_{p} F\cap \Sigma_{p}G$.
It is obvious that $\Sigma_{p} (F\cap  G)\subseteq \Sigma_{p} F\cap \Sigma_{p}G$. On the other hand,
for any  $\xi\in \Sigma_{p} F\cap \Sigma_{p} G$, the radial curve starting from $p$ with direction $\xi$ lies in $F\cap G$
(by Corollary \ref{coro4.2}), which implies that $\Sigma_{p} (F\cap  G)\supseteq \Sigma_{p} F\cap \Sigma_{p}G$.
\end{proof}

\begin{remark}\label{rem4.6}{\rm It is true that the union of two extremal subsets is also extremal (\cite{PP1}).
However, in general, the union of two quasi-convex subsets might not be quasi-convex
(e.g., the union of two lines in a plane is not quasi-convex).}
\end{remark}

\begin{remark}\label{rem4.7}{\rm
Let $F$ and $G$ be two extremal subsets in an $X\in\Alex(k)$.
Without involving the concept of gradient curve,
\cite{PP1} has proven that both $F\cap G$ and $\overline{F\setminus G}$ are also
extremal in $X$ by showing $\Sigma_{p} (F\cap  G)=\Sigma_{p} F\cap \Sigma_{p}G$
and $\Sigma_{p} \overline{F\setminus G}=\overline{\Sigma_{p}F\setminus \Sigma_{p}G}$ firstly. However, we cannot give a proof for Proposition \ref{prop4.5} in such a way
(because the condition of `quasi-convex' is much weaker than `extremal').
Moreover, so far we cannot either show $\Sigma_{p} \overline{F\setminus G}=\overline{\Sigma_{p}F\setminus \Sigma_{p}G}$
or prove that $\overline{F\setminus G}$ is still quasi-convex if
$F$ and $G$ are quasi-convex in $X$ with $\dim(X)\geqslant 3$.}
\end{remark}

\subsection{Quasi-convex subsets in spherical suspensions}

Let $Z\triangleq\{z_1,z_2\}*Y$ with $|z_1z_2|=\pi$ and $Y\in \text{\rm Alex}(1)$ be a spherical suspension
(for details about such suspension structure refer to \cite{BGP}).
As examples of quasi-convex subsets, \cite{SSW} has shown that
if a quasi-convex subset $F$ in $Z$ contains at least two points including $z_1$, then $F=\{z_1,z_2\}*(F\cap Y)$.
In this paper, we provide a short proof and a stronger version of it via Proposition \ref{prop4.1}.

\begin{prop}\label{prop4.8}
Let $Z=\{z_1,z_2\}*Y$ with $|z_1z_2|=\pi$ and $Y\in \text{\rm Alex}(1)$,
and let $F$ be a quasi-convex subset in $Z$ containing at least two points. Then either $F\subseteq Y$, or there is $\bar z_1$, $\bar z_2$ and $\bar Y\in \text{\rm Alex}(1)$ with $|\bar z_1\bar z_2|=\pi$ such that $Z=\{\bar z_1,\bar z_2\}*\bar Y$ and $F=\{\bar z_1,\bar z_2\}*(F\cap \bar Y)$.
\end{prop}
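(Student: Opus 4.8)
The plan is to use Proposition \ref{prop4.1} (the gradient-curve characterization of quasi-convexity) together with the explicit suspension structure of $Z$ to detect whether $F$ contains a ``pole''. First I would consider the distance function $\dist_{z_1}$ on $Z$; in the suspension $Z=\{z_1,z_2\}*Y$ one has $\dist_{z_1}\equiv\dist_{z_1}$ and its gradient curves are exactly the meridians, i.e.\ for any $w\in Z\setminus\{z_1,z_2\}$ the gradient curve of $\dist_{z_1}$ starting from $w$ runs along the meridian $\{z_1,z_2\}*\{y\}$ (where $y\in Y$ is the $Y$-coordinate of $w$) toward $z_2$, eventually reaching $z_2$. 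So if $F$ contains a point $r\notin Y$ with $r\ne z_2$, then taking $p=z_1$ if $z_1\in F$, or applying Proposition \ref{prop4.1} with two suitable points of $F$ on the same meridian, forces the whole meridian of $r$ (hence $z_2$, and by symmetry $z_1$) to lie in $F$.

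The clean dichotomy I expect is: either every point of $F$ lies in $Y$ (the first alternative), or $F$ contains at least one of the poles $z_1,z_2$. In the latter case, say $z_1\in F$. Since $F$ has at least two points, pick $r\in F\setminus\{z_1\}$; by Proposition \ref{prop4.1} the gradient curve of $\dist_{z_1}$ from $r$ lies in $F$, and this curve is the meridian through $r$ traversed to $z_2$, so $z_2\in F$ as well. Now for \emph{every} $r\in F\setminus\{z_1,z_2\}$, the gradient curve of $\dist_{z_1}$ from $r$ (a meridian arc to $z_2$) and the gradient curve of $\dist_{z_2}$ from $r$ (the complementary meridian arc to $z_1$) both lie in $F$, so the entire meridian $\{z_1,z_2\}*\{y_r\}$ through $r$ is contained in $F$. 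Hence $F=\{z_1,z_2\}*(F\cap Y)$, i.e.\ the stated conclusion with $\bar z_i=z_i$ and $\bar Y=Y$.

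It remains to handle the genuinely new case in this ``stronger version'': $F\not\subseteq Y$ but $F$ contains \emph{neither} $z_1$ nor $z_2$ — a priori $F$ could be a meridian-arc-free set meeting some meridian in an interior point. But the argument above already shows that if $r\in F$ lies off $Y$ and off the poles, the meridian through $r$ is forced into $F$ (apply Proposition \ref{prop4.1} to $r$ and a second point of $F$, or use that $F$ is closed and take limits of gradient curves emanating from nearby points of $F$ toward $z_1$), so in fact $z_1,z_2\in F$, contradicting the assumption of this case — meaning this case cannot occur, and the dichotomy collapses to the two stated alternatives. To produce the relabeled $\bar z_1,\bar z_2,\bar Y$ when the poles one started with are not the ``right'' ones, I would note that $Z$ may admit several suspension structures and that once $F$ is shown to be a union of meridians of \emph{some} suspension structure, one simply names its two poles $\bar z_1,\bar z_2$; but with the analysis above the original poles already work, so this relabeling is only cosmetic.

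\textbf{Main obstacle.} The delicate point is computing the gradient curves of $\dist_{z_1}$ in the suspension and verifying they are exactly the meridians reaching $z_2$ — in particular that a gradient curve started at an interior meridian point does not stop short but flows all the way to the pole, which uses that $\dist_{z_1}$ has no critical point on $Z\setminus\{z_1,z_2\}$ (equivalently, the farthest direction from $\Uparrow_w^{z_1}$ at $w$ is the meridian direction toward $z_2$, with angle $>\frac\pi2$). Once this structural computation in $\{z_1,z_2\}*Y$ is in hand, Proposition \ref{prop4.1} does the rest almost mechanically, and the case analysis to rule out ``$F$ off $Y$ but missing both poles'' is short.
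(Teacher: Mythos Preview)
Your treatment of the case $z_1\in F$ (and, by symmetry, $z_2\in F$) is correct and matches the paper's argument: the gradient curves of $\dist_{z_1}$ are meridian arcs, so Proposition \ref{prop4.1} forces each full meridian through a point of $F$ into $F$, giving $F=\{z_1,z_2\}*(F\cap Y)$.

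The gap is in your handling of the case $F\not\subseteq Y$ with $z_1,z_2\notin F$. You assert this case cannot occur, but it can: take $Z=\mathbb S^2$ with $z_1,z_2$ the north and south poles and $Y$ the equator, and let $F$ be a great circle tilted off the equator but not passing through the poles. Then $F$ is quasi-convex, $F\not\subseteq Y$, and $z_1,z_2\notin F$. Your proposed mechanism for forcing $z_1,z_2$ into $F$ fails because Proposition \ref{prop4.1} only lets you use gradient curves of $\dist_p$ for $p\in F$; since $z_1\notin F$, you cannot invoke the gradient flow of $\dist_{z_1}$, and for a generic $p\in F$ the gradient curve of $\dist_p$ through $r$ is \emph{not} a meridian of the given suspension structure. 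The vague appeals to ``a second point of $F$'' or ``limits of gradient curves toward $z_1$'' do not supply the missing base point in $F$.

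The paper resolves this case differently and substantively, not cosmetically: it lets $\bar z_1,\bar z_2\in F$ be points realizing $|z_1F|$ and $|z_2F|$, and then uses the \emph{definition} of quasi-convexity (not Proposition \ref{prop4.1}) together with the suspension geometry to show that $|\bar z_1\bar z_2|=\pi$. Concretely, if $|\bar z_1\bar z_2|<\pi$ one computes that $|\uparrow_{\bar z_2}^{\bar z_1}\uparrow_{\bar z_2}^{z_2}|>\frac\pi2$, contradicting (\ref{eqn0.1}) at the minimum point $\bar z_2$ of $\dist_{z_2}|_F$. Once $|\bar z_1\bar z_2|=\pi$, the pair $\bar z_1,\bar z_2$ are poles of a \emph{new} suspension structure $Z=\{\bar z_1,\bar z_2\}*\bar Y$, and now your first-case argument applies with these poles. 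So the relabeling is the crux of the argument, not a cosmetic afterthought.
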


\begin{proof} We first consider a special case where $z_1$ belongs to $F$. As mentioned above,
$F=\{z_1,z_2\}*(F\cap Y)$ in this case. In fact,
for any point $r\in F\setminus\{z_1\}$, $[rz_2]$ is the gradient curve of $\dist_{z_1}$ starting from $r$
by the spherical suspension structure of $Z$, and thus has to lie in $F$ by Proposition \ref{prop4.1}. In particular, $z_2\in F$. Similarly, if $r\neq z_2$, then $[rz_1]$ also belongs to $F$;
namely, the minimal geodesic $[z_1z_2]$ passing $r$ belongs to $F$. This implies that $F=\{z_1,z_2\}*(F\cap Y)$.

We now can assume that $z_1\not\in F$, $z_2\not\in F$ and $F\cap(Z\setminus Y)\neq\emptyset$.
Then we can assume that there is $\bar z_1\in F$ and $\bar z_2\in F$ such that $\frac\pi2>|z_1F|=|z_1\bar z_1|\leqslant|z_2\bar z_2|=|z_2F|$. So, if $|\bar z_1\bar z_2|<\pi$, then the spherical suspension structure of $Z$ guarantees that,  for any $[\bar z_1\bar z_2]$,
$$|\uparrow_{\bar z_2}^{\bar z_1}\uparrow_{\bar z_2}^{z_1}|<\frac\pi2
\text{ and } |\uparrow_{\bar z_2}^{\bar z_1}\uparrow_{\bar z_2}^{z_2}|=\pi-|\uparrow_{\bar z_2}^{\bar z_1}\uparrow_{\bar z_2}^{z_1}|.$$
However, the quasi-convexity of $F$ implies that $|\uparrow_{\bar z_2}^{\bar z_1}\uparrow_{\bar z_2}^{z_2}|\leqslant\frac\pi2$, a contradiction. Namely, it has to hold that $|\bar z_1\bar z_2|=\pi$ (so $z_i$ and $\bar z_i$ lie in a (great) circle of perimeter $2\pi$). Hence, there is $\bar Y\in \text{\rm Alex}(1)$ such that $Z=\{\bar z_1,\bar z_2\}*\bar Y$; and thus, similar to the special case above, $F=\{\bar z_1,\bar z_2\}*(F\cap \bar Y)$.
\end{proof}

\subsection{Fixed point set of an isometry}

\begin{prop}\label{prop4.9}
Let $X\in \text{\rm Alex}(k)$, and let $F$ be the fixed point set of an isometry on $X$. Then $F$ is quasi-convex in $X$.
\end{prop}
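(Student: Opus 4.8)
The plan is to derive this from the gradient-curve characterisation of quasi-convexity, Proposition \ref{prop4.1}, rather than from Definition \ref{def0.1} directly. Write $\phi\colon X\to X$ for the isometry with $F=\operatorname{Fix}(\phi)$; then $F$ is closed, and if $F$ is empty or a single point it is quasi-convex by convention, so assume $F$ has at least two points. By Proposition \ref{prop4.1} it suffices to show that, for any two distinct points $p,r\in F$, the gradient curve $\alpha(t)|_{t\in[0,+\infty)}$ of $\dist_p$ starting from $r$ stays in $F$, i.e. $\phi(\alpha(t))=\alpha(t)$ for all $t$.

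First I would record two elementary facts. Since $\phi$ is an isometry with $\phi(p)=p$, it preserves $\dist_p$: indeed $\dist_p(\phi(x))=|p\,\phi(x)|=|\phi(p)\,\phi(x)|=|px|=\dist_p(x)$. Secondly, for each $q\in X$ the isometry $\phi$ induces an isometry $d_q\phi\colon\Sigma_qX\to\Sigma_{\phi(q)}X$ that sends the direction of any $[qw]$ to the direction of $\phi([qw])=[\phi(q)\phi(w)]$, hence carries the set $\Uparrow_q^w$ onto $\Uparrow_{\phi(q)}^{\phi(w)}$; in particular, for $q\in F$ it carries $\Uparrow_q^p$ onto $\Uparrow_q^p$.

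Now set $\beta\triangleq\phi\circ\alpha$. Then $\beta$ is continuous, $\beta(0)=\phi(r)=r$, and I claim $\beta$ satisfies, verbatim, the defining condition of the gradient curve of $\dist_p$ starting from $r$ recalled at the beginning of this section. Fix $t_0$. In the first alternative, $\Uparrow_{\alpha(t_0)}^{\alpha(t)}\to\eta$ as $t\to t_0^+$, so $\Uparrow_{\beta(t_0)}^{\beta(t)}=d_{\alpha(t_0)}\phi\big(\Uparrow_{\alpha(t_0)}^{\alpha(t)}\big)\to d_{\alpha(t_0)}\phi(\eta)$; since $d_{\alpha(t_0)}\phi$ is an isometry carrying $\Uparrow_{\alpha(t_0)}^{p}$ onto $\Uparrow_{\beta(t_0)}^{p}$, we get
\[
|\Uparrow_{\beta(t_0)}^{p}\,d\phi(\eta)|=|\Uparrow_{\alpha(t_0)}^{p}\eta|=\max\{|\Uparrow_{\alpha(t_0)}^{p}\nu|:\nu\in\Sigma_{\alpha(t_0)}X\}=\max\{|\Uparrow_{\beta(t_0)}^{p}\nu|:\nu\in\Sigma_{\beta(t_0)}X\}>\tfrac\pi2,
\]
and the derivative requirement $\dist_p'(\beta(t))|_{t=t_0^+}=-\cos|\Uparrow_{\beta(t_0)}^{p}\,d\phi(\eta)|$ follows from $\dist_p\circ\phi=\dist_p$ together with the fact that $\phi$ preserves speeds of curves. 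The stationary alternative transfers in exactly the same way. Hence $\beta$ is a gradient curve of $\dist_p$ from $r$; by uniqueness of the gradient curve of a (semiconcave) distance function through a given point (\cite{Pet}), $\beta=\alpha$, i.e. $\phi(\alpha(t))=\alpha(t)$, so $\alpha(t)\in F$ for all $t$. Proposition \ref{prop4.1} then yields that $F$ is quasi-convex in $X$.

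The only genuinely non-routine ingredient is the uniqueness of the gradient curve, which I would quote from \cite{Pet} (it is in the same spirit as the convergence statement, Lemma 2.1.5 of \cite{Pet}, already used for Corollary \ref{coro4.3}); everything else is bookkeeping about how an isometry acts on minimal geodesics, spaces of directions, and distance functions. One could instead attempt to check Definition \ref{def0.1} by hand --- given $q\notin F$ with $\dist_q|_F$ minimised at $p\in F$, apply $\phi$ to relate $q$ and $\phi(q)$ and argue with a midpoint of $q$ and $\phi(q)$ --- but since $\phi(q)\neq q$ in general this is more delicate, so I would avoid it in favour of the gradient-curve route.
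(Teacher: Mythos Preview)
Your proof is correct and follows exactly the paper's approach: invoke Proposition \ref{prop4.1}, observe that the isometry maps the gradient curve of $\dist_p$ from $r$ to another gradient curve of $\dist_p$ from $r$, and conclude by uniqueness that the curve is fixed pointwise. The paper compresses all of this into a single sentence (``by the uniqueness of the gradient curve of $\dist_p$ starting from $r$, it must be fixed by the isometry''), whereas you spell out the verification that $\phi\circ\alpha$ satisfies the gradient-curve condition; but the argument is the same.
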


Recall that the fixed point set of an isometry on a complete Riemannian manifold is totally geodesic, while a quasi-convex subset in a complete Riemannian manifold must be totally geodesic (\cite{SSW}).

\begin{proof} We need only to consider the case where $F$ contains at least two points.
Let $p$ and $r$ be arbitrary two distinct points in $F$. By the uniqueness of the gradient curve of $\dist_p$ starting from $r$, it must be fixed by the isometry. I.e., the gradient curve of $\dist_p$ starting from $r$
belongs to $F$, so $F$ is quasi-convex by Proposition \ref{prop4.1}.
\end{proof}

\begin{remark}\label{rem4.10}{\rm
Let $\gamma$ be the isometry fixing $F$ in Proposition \ref{prop4.9}, and let $p\in F$. Note that there is a naturally induced isometry $\bar \gamma$ on $\Sigma_pX$, and $\Sigma_pF$ belongs to the fixed point set $\bar F$ of $\bar\gamma$.
On the other hand, by the uniqueness of radial curve starting from a point with a fixed direction,
$\gamma$ must fix the radial curve starting from $p$ with any direction $\xi\in\bar F$. Namely, $\Sigma_pF=\bar F$.}
\end{remark}

\begin{remark}\label{rem4.11}{\rm
Let $X\in \text{\rm Alex}(k)$, and let $\Gamma$ be a compact group which acts on $X$ by isometries
with nonempty fixed point set $F$. In \cite{PP1}, it has been shown that $F$
is extremal as a subset of the orbit space $X/\Gamma$ (where a key tool is `strictly convex hull'). Based on this, \cite{SSW} has proven that $F$ is quasi-convex in $X$. Note that we can prove it using the same arguments as in the proof of Proposition \ref{prop4.9}. (We would like to point out that, using the technique of strictly convex hull, one can also see that $\Sigma_pF=\bar F$ in Remark \ref{rem4.10}.)}
\end{remark}

%%%%%%%%%%%%%%%%%%%%%%%%%%%%%%%%%%%%%%%%%%%%%%%%%%%%%%%%%%%%%%%%%%%%%%%%%%%%%%%%%%%%%%%%%%%%

\noindent School of Mathematical Sciences (and Lab. math. Com.
Sys.), Beijing Normal University, Beijing, 100875
P.R.C.\\
e-mail: suxiaole$@$bnu.edu.cn; wyusheng$@$bnu.edu.cn

\vskip2mm

\noindent Mathematics Department, Capital Normal University,
Beijing, 100037 P.R.C.\\
e-mail: 5598@cnu.edu.cn


\begin{thebibliography}{MMM}

\bibitem[BGP]{BGP}
Yu. Burago, M. Gromov and G. Perel$'$man, A.D. Alexandrov spaces
with curvature bounded below, Uspeckhi Mat. Nank 47: 2 (1992): 3-51.

\bibitem[PP1]{PP1} G. Perel$'$man and A. Petrunin, Extremal subsets in Alexandrov spaces and the generalized Liberman Theorem, Algebra i Analiz 5 (1993), no. 1; English transl. in St. Petersberg Math. J, Vol. 5(1994): 215-227.

\bibitem[PP2]{PP2}
G. Perel$'$man and A. Petrunin, Quasigeodesics and Gradient Curves in Alexandrov spaces,
www.math.psu.edu/Petrunin/papers/.

\bibitem[Pet]{Pet}  A. Petrunin, Semiconcave functions in Alexandrov  geometry, DOI: 10.4310 / SDG. 2006.v11. n1.a6.

\bibitem[SSW]{SSW}
X. L. Su, H. W. Sun, Y. S. Wang, Quasi-convex subsets in Alexandrov spaces with louer curvature bound,
arXiv: 2006.00296.
\end{thebibliography}
\end{document}